\title{Snell meets Fagnano. Path optimization through an imperfect mirror.}
\author{Maxim Arnold \and Jaewoo Park}
\date{}
\newtheorem{lemma}{Lemma}
\newtheorem{proposition}{Proposition}
\newtheorem{theorem}{Theorem}
\newtheorem{problem}{Problem}
\newtheorem*{problem*}{Problem}
\theoremstyle{definition}
\newtheorem{definition}{Definition}
\newtheorem{remark}{Remark}
\newtheorem{corollary}{Corollary}
\newcommand{\dis}{\mathsf{d}}
\newcommand{\car}{\mathcal{C}}
\def\s{s}
\def \a {\varphi}
\newcommand{\norm}[1]{\lvert #1 \rvert}
\begin{document}

\maketitle
\begin{abstract}
The renowned Fagnano problem asks for the inscribed triangle of minimal perimeter within a given reference triangle. Equivalently, it seeks a billiard trajectory inside the triangle that closes after three reflections. In this note, we consider a modification of this classical problem: finding the inscribed triangle of minimal \emph{weighted} perimeter -- or, equivalently, the periodic trajectory of a \emph{Snell billiard}.
\end{abstract}
\section{Introduction}

A classic puzzle, known since the Middle Ages, asks:

\begin{problem}
\label{prb:river} Given two villages $A$ and $B$ on the same bank of a river, find the shortest road connecting them so that both villages have access to the water.
\end{problem}

The solution is a broken line whose intermediate point lies at the intersection of the riverbank with the line segment joining one village to the reflection of the other across the river. (See the left panel of Fig.~\ref{fig: river}.)

A more elegant description is as follows: consider two copies of the Euclidean half-plane glued along their boundary (the riverbank). The shortest road corresponds to the \emph{geodesic} connecting point $A$ on one half-plane to point $B$ on the other.

Now, let us modify the problem. Suppose one village is much larger than the other, and we wish to minimize the total “effort” weighted by population. The two legs of the broken line should then be weighted by different coefficients $\lambda_1$ and $\lambda_2$, reflecting the populations. In geometric terms, we seek a geodesic on the same glued surface, but with the metrics $\dis(A,\cdot)$ and $\dis(B,\cdot)$ on the two half-planes scaled by $\lambda_1$ and $\lambda_2$.

\begin{lemma}\label{lm:Snell} Let $X$ be the point on the river bank, minimizing the weighted sum $$d(X):=\lambda_1\dis(A,X)+\lambda_2 \dis(B,X).$$ Then the angles between the normal to the river bank at $X$ and the rays $AX$ and $XB$ satisfy Snell's law of refraction (see the right panel of Fig. \ref{fig: river}):
\begin{equation}
	\label{eq: snells}
    \dfrac{\sin \alpha}{\sin \beta }=\dfrac{\lambda_1}{\lambda_2}=:\varkappa.
	\end{equation}

\end{lemma}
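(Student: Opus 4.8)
The plan is to prove Snell's law by a standard first-order variational argument: parametrize the point $X$ along the riverbank, differentiate the weighted distance $d(X)$, and show that the stationarity condition $d'(X)=0$ is exactly equation~\eqref{eq: snells}.

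First I would set up coordinates so that the riverbank is the $x$-axis and $X=(x,0)$ is a variable point on it, with $A=(a_1,a_2)$ and $B=(b_1,b_2)$ fixed. Then $\dis(A,X)=\sqrt{(x-a_1)^2+a_2^2}$ and similarly for $\dis(B,X)$, so that
\begin{equation}
d(x)=\lambda_1\sqrt{(x-a_1)^2+a_2^2}+\lambda_2\sqrt{(x-b_1)^2+b_2^2}.
\end{equation}
Next I would compute the derivative. The key observation is that $\frac{d}{dx}\dis(A,X)=\frac{x-a_1}{\dis(A,X)}$, and this ratio is precisely $\pm\sin\alpha$, where $\alpha$ is the angle between the ray $AX$ and the normal to the bank (the vertical direction). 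Geometrically, differentiating a distance with respect to motion of the endpoint along a line yields the cosine of the angle between the line and the segment, i.e. the sine of the angle with the normal. I would make this identification explicit so that the algebra is tied directly to the geometric picture in the right panel of Fig.~\ref{fig: river}.

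Setting $d'(x)=0$ then gives $\lambda_1\sin\alpha=\lambda_2\sin\beta$ (with the correct signs arranged so that $A$ and $B$ lie on opposite sides of the normal, as in refraction), which rearranges to $\sin\alpha/\sin\beta=\lambda_1/\lambda_2$. Finally I would confirm this critical point is a minimum rather than a saddle or maximum, either by checking that $d$ is strictly convex in $x$ (each summand is convex as a composition of an affine map with the convex norm) or by noting that $d(x)\to\infty$ as $x\to\pm\infty$ while $d$ is continuous, forcing the unique interior critical point to be the global minimizer.

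I expect the main subtlety to be the bookkeeping of signs and the careful geometric identification of $\sin\alpha$ and $\sin\beta$: one must verify that the derivative of each distance term really is the sine of the angle to the \emph{normal} (not the cosine, and not the angle to the bank itself), and that the two terms carry opposite signs reflecting the refraction geometry. Once that identification is pinned down, the remainder is a routine one-variable optimization.
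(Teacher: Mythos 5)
Your proposal is correct and takes essentially the same route as the paper: the paper's proof sets the gradient $\nabla d(X)=\lambda_1 v_A+\lambda_2 v_B$ orthogonal to the bank, which is precisely your coordinate computation $d'(x)=0$ together with the identification of each distance derivative as the sine of the angle to the normal; your convexity/coercivity check that the critical point is actually the minimizer is a small addition the paper omits. One bookkeeping caution: the condition $\lambda_1\sin\alpha=\lambda_2\sin\beta$ rearranges to $\sin\alpha/\sin\beta=\lambda_2/\lambda_1$, not $\lambda_1/\lambda_2$, so as written your last step does not literally yield \eqref{eq: snells}; the paper's proof states the cancellation as $\lambda_2\sin\alpha=\lambda_1\sin\beta$, so before claiming the stated ratio you must fix which of $\alpha,\beta$ is attached to the $A$-ray and which to the $B$-ray (the labeling in Fig.~\ref{fig: river}) and carry that convention through the derivative computation.
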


\begin{figure}[!hbt]
\centering
\begin{tikzpicture}
\begin{scope}[line cap=round,line join=round]
	\fill[color=gray!10](-2.5,0)--(2.5,0)--(2.5,-1.2)--(-2.5,-1.2)--(-2.5,0);
    \draw(-2.2,1.1)--(0,0);
	\draw[red](0,0)--(1.5,0.75);
        \draw[thick](-2.5,0)--(2.5,0);
	\draw[red,dashed](0,0)--(1.5,-0.75);
    \node at (-2.2,1.35) {$A$};
    \node at (1.69,0.75) {$B$};
    \node at (1.75,-0.75) {$B^*$};
    \node[circle,fill=black,inner sep=1pt] (B) at (1.5,0.75){};
     \node[circle,fill=black,inner sep=1pt] (A) at (-2.2,1.1){};
      \node[circle,fill=black,inner sep=1pt] (B1) at (1.5,-0.75){};
	\end{scope}
    
    \begin{scope}[xshift=6cm, line cap=round,line join=round]
	\fill[color=gray!30](-2.5,0)--(2.5,0)--(2.5,-1.2)--(-2.5,-1.2)--(-2.5,0);
	\fill[color=gray!20](0,0.5) arc (90:63:0.5)--(0,0)--cycle;
	\draw[red,->,>=stealth'](-1,1)--(0,0)--(0.5,1);
	\draw[dashed](0,0)--(0.5,-1);
	\draw[dotted](0,1)--(0,-1);
    \put(188,24) {\small$\beta$};
    \put(151,24) {\small$\alpha$};
	\end{scope}
\end{tikzpicture}
\caption{Left: River access optimiaztion problem. Right: Snell's law of refraction.} \label{fig: river}
\end{figure}
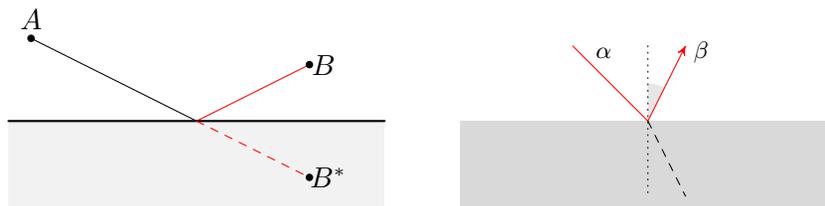

\begin{proof}
   Indeed, $X$ minimizes $d(X)$ if and only if the derivative of $d(X)$ in the direction of river bank is zero, i.e. the gradient of $d(X)$ at $X$ should be orthogonal to the river. 
The vector $\nabla d(X)$ is the sum $\lambda_1 v_A+\lambda_2 v_B$, where $v_A$ (resp. $v_B$) is the unit vector in the direction from $X$ to $A$ (resp. $B$). The horizontal components of these vectors cancel precisely when 
    $\lambda_2\sin(\alpha) = \lambda_1\sin(\beta)$.
        \end{proof}

        Hence, instead of finding the ray from $A$ that passes through $B$ after an ordinary reflection in the riverbank, one should look for the ray from $A$ that reaches $B$ after a \emph{Snell reflection} at the bank. Equivalently, while the classical problem seeks an ellipse with foci at $A$ and $B$ tangent to the river, the modified problem asks for a quartic curve -- known as the \emph{Cartesian oval} (see, e.g., \cite{Bacon}) -- with the same property.

In this note, we describe a similar modification of another famous path-minimization problem, which, remarkably, still admits an elementary geometric construction.

\subsection{Snell--Fagnano Problem.}
Consider an acute triangle $ABC$ and wrap a rubber band around its sides. What shape will we see? This question constitutes the famous \emph{Fagnano's problem}. 

 The rubber band would end up in a position that minimizes its total length. The classical resolution of this question states that the rubber band should connect the feet of the triangular altitudes $AA'$, $BB'$ and $CC'$, thus forming the \emph{orthic triangle} (see the left panel of the Fig. \ref{fig:Fagnano}) Another, perhaps fancier, way to say this is that the ``minimizer is the \emph{pedal triangle} of the orthocenter".

Among several proofs of this beautiful result we emphasise the \emph{unwrapping argument}: Let $A_1$ be the reflection of $A$ in the side $BC$, $B_1$ be the reflection of $B$ in the side $CA_1$, etc. The altitude $BB'$ under the first reflection is mapped to the altitude $BB_1'$ (see right panel of Fig. \ref{fig:Fagnano}) and so the points $C'$, $A'$ and the reflection of the point $B'$ are collinear. After six reflections, one obtains the triangle $A_2B_2C_2$ being the translation of the original triangle $ABC$. The reflections of the sides of the orthic triangle form a straight line, connecting a point on the side $BC$ with its image under the above translation. Hence such a triangle indeed has the minimal possible sum of the side lengths\footnote{ We leave the proof of uniqueness to the interested reader as a nice exercise in elementary geometry.}. 
\begin{figure}[!hbt]
\begin{tikzpicture}
    \begin{scope}
    \tikzset{
			dot/.style={circle,inner sep=1pt,fill,name=#1}} 
		\tikzset{decoration={
			markings,
			mark=at position 0.5 with {\arrow{Stealth}}}}
           \tikzmath{\rad = 0.8; \oy=-1; \ox=0; 
\ax = 4; \ay=0; \bx=0; \by = 4; \cx=-2;\cy=0; 
\mx=\ax*(\by*\by+\ax*\cx)/(\ax*\ax+\by*\by);
\my=(\by/\ax)*(\ax-\mx);
\nx=\cx*(\by*\by+\ax*\cx)/(\cx*\cx+\by*\by);
\ny=(\by/\cx)*(\cx-\nx);
} 
\node[dot=$O$] (O) at (\ox,\oy){};
    \node[dot=$A$] (A) at (\ax*\rad+\ox,\ay*\rad+\oy){};
    \node[dot=$B$] (B) at (\bx*\rad+\ox,\by*\rad+\oy){};
    \node[dot=$C$] (C) at (\cx*\rad+\ox,\cy*\rad+\oy){};
    \draw(A)--(B)--(C)--(A);
    \node[dot=$M$] (M) at (\mx*\rad+\ox,\my*\rad+\oy){};
   \node[dot=$N$] (N) at (\nx*\rad+\ox,\ny*\rad+\oy){};
   
        \draw[red](O)--(M)--(N)--(O);
        \draw[dashed] (B)--(O);
        \draw[dashed] (A)--(N);
        \draw[dashed] (C)--(M);

        \node at (\cx*\rad+\ox-0.2,\cy*\rad+\oy-0.2) {\small$A$};
        \node at (\ax*\rad+\ox+0.2,\ay*\rad+\oy-0.2) {\small$C$};
        \node at (\bx*\rad+\ox-0.2,\by*\rad+\oy+0.1) {\small$B$};
\node at (\ox,\oy-0.2) {\small$B'$};
  \node at (\mx*\rad+\ox+0.2,\my*\rad+\oy+0.2){\small$A'$};
     \node at (\nx*\rad+\ox-0.15,\ny*\rad+\oy+0.15){\small$C'$};

    \end{scope}
\begin{scope}[xshift=7cm]
    \tikzset{
			dot/.style={circle,inner sep=1pt,fill,name=#1}} 
		\tikzset{decoration={
			markings,
			mark=at position 0.5 with {\arrow{Stealth}}}}
           \tikzmath{\rad = 0.3; \oy=-1; \ox=-2; 
\ax = 4; \ay=0; \bx=0; \by = 4; \cx=-2;\cy=0; 
\mx=\ax*(\by*\by+\ax*\cx)/(\ax*\ax+\by*\by);
\my=(\by/\ax)*(\ax-\mx);
\nx=\cx*(\by*\by+\ax*\cx)/(\cx*\cx+\by*\by);
\ny=(\by/\cx)*(\cx-\nx);
\cpx=\cx-2*(\by-\ay)*((\cx-\ax)*(\by-\ay)+(\cy-\ay)*(\ax-\bx))/((\ax-\bx)^2+(\ay-\by)^2);
\cpy=\cy-2*(\ax-\bx)*((\cx-\ax)*(\by-\ay)+(\cy-\ay)*(\ax-\bx))/((\ax-\bx)^2+(\ay-\by)^2);
\bpx=\bx-2*(\cpy-\ay)*((\bx-\ax)*(\cpy-\ay)+(\by-\ay)*(\ax-\cpx))/((\ax-\cpx)^2+(\ay-\cpy)^2);
\bpy=\by-2*(\ax-\cpx)*((\bx-\ax)*(\cpy-\ay)+(\by-\ay)*(\ax-\cpx))/((\ax-\cpx)^2+(\ay-\cpy)^2);
\apx=\ax-2*(\cpy-\bpy)*((\ax-\bpx)*(\cpy-\bpy)+(\ay-\bpy)*(\bpx-\cpx))/((\bpx-\cpx)^2+(\bpy-\cpy)^2);
\apy=\ay-2*(\bpx-\cpx)*((\ax-\bpx)*(\cpy-\bpy)+(\ay-\bpy)*(\bpx-\cpx))/((\bpx-\cpx)^2+(\bpy-\cpy)^2);
\cppx=\cpx-2*(\bpy-\apy)*((\cpx-\apx)*(\bpy-\apy)+(\cpy-\apy)*(\apx-\bpx))/((\apx-\bpx)^2+(\apy-\bpy)^2);
\cppy=\cpy-2*(\apx-\bpx)*((\cpx-\apx)*(\bpy-\apy)+(\cpy-\apy)*(\apx-\bpx))/((\apx-\bpx)^2+(\apy-\bpy)^2);
\bppx=\bpx-2*(\cppy-\apy)*((\bpx-\apx)*(\cppy-\apy)+(\bpy-\apy)*(\apx-\cppx))/((\apx-\cppx)^2+(\apy-\cppy)^2);
\bppy=\bpy-2*(\apx-\cppx)*((\bpx-\apx)*(\cppy-\apy)+(\bpy-\apy)*(\apx-\cppx))/((\apx-\cppx)^2+(\apy-\cppy)^2);
\appx=\apx-2*(\cppy-\bppy)*((\apx-\bppx)*(\cppy-\bppy)+(\apy-\bppy)*(\bppx-\cppx))/((\bppx-\cppx)^2+(\bppy-\cppy)^2);
\appy=\apy-2*(\bppx-\cppx)*((\apx-\bppx)*(\cppy-\bppy)+(\apy-\bppy)*(\bppx-\cppx))/((\bppx-\cppx)^2+(\bppy-\cppy)^2);
\npx=0-2*(\by-\ay)*((0-\ax)*(\by-\ay)+(0-\ay)*(\ax-\bx))/((\ax-\bx)^2+(\ay-\by)^2);
\npy=0-2*(\ax-\bx)*((0-\ax)*(\by-\ay)+(0-\ay)*(\ax-\bx))/((\ax-\bx)^2+(\ay-\by)^2);
\nppx=\mx-2*(\cpy-\ay)*((\mx-\ax)*(\cpy-\ay)+(\my-\ay)*(\ax-\cpx))/((\ax-\cpx)^2+(\ay-\cpy)^2);
\nppy=\my-2*(\ax-\cpx)*((\mx-\ax)*(\cpy-\ay)+(\my-\ay)*(\ax-\cpx))/((\ax-\cpx)^2+(\ay-\cpy)^2);
\nppx=\mx+4.8*(\npx-\mx);
\nppy=\my+4.8*(\npy-\my);
}

\node[dot=$O$] (O) at (\ox,\oy){};
    \node[dot=$A$] (A) at (\ax*\rad+\ox,\ay*\rad+\oy){};
    \node[dot=$B$] (B) at (\bx*\rad+\ox,\by*\rad+\oy){};
    \node[dot=$C$] (C) at (\cx*\rad+\ox,\cy*\rad+\oy){};
    \draw[very thick, color=green] (A)--(B);
    \draw[very thick,color=blue] (B)--(C);
    \draw[very thick, color=cyan] (A)--(C);

\node[dot=$C1$] (C1) at (\cpx*\rad+\ox,\cpy*\rad+\oy){};
\node[dot=$B1$] (B1) at (\bpx*\rad+\ox,\bpy*\rad+\oy){};
\node[dot=$A1$] (A1) at (\apx*\rad+\ox,\apy*\rad+\oy){};
\node[dot=$C2$] (C2) at (\cppx*\rad+\ox,\cppy*\rad+\oy){};
\node[dot=$B2$] (B2) at (\bppx*\rad+\ox,\bppy*\rad+\oy){};
\node[dot=$A2$] (A2) at (\appx*\rad+\ox,\appy*\rad+\oy){};

    \draw[thick,color=blue!50] (B)--(C1);
    \draw[thick, color=cyan!50] (A)--(C1);
    
\draw[thick, color=green!50] (A)--(B1);
    \draw[thick,color=blue!50] (B1)--(C1);

\draw[thick, color=green!50] (A1)--(B1);
 \draw[thick, color=cyan!50] (A1)--(C1);
    
    \draw[thick,color=blue!50] (B1)--(C2);
    \draw[thick, color=cyan!50] (A1)--(C2);

\draw[thick, color=green!50] (A1)--(B2);
    \draw[thick,color=blue!50] (B2)--(C2);

\draw[thick,color=green!50] (A2)--(B2);
    \draw[thick, color=cyan!50] (A2)--(C2);  
    
    \draw[gray,dotted] (B)--(B1);
    \draw[gray,dotted] (C)--(C1);
    \draw[gray,dotted] (A)--(A1);
     \draw[gray,dotted] (C1)--(C2);
     \draw[gray,dotted] (B1)--(B2);
     \draw[gray,dotted] (A1)--(A2);

    \node[dot=$M$] (M) at (\mx*\rad+\ox,\my*\rad+\oy){};
   \node[dot=$N$] (N) at (\nx*\rad+\ox,\ny*\rad+\oy){};
   \node[dot=$Np$] (Np) at (\nppx*\rad+\ox,\nppy*\rad+\oy){};
        \draw[red](O)--(M)--(N)--(O);
        \draw[thick, red](M)--(Np);

        \node at (\bx*\rad+\ox-0.2,\by*\rad+\oy+0.1) {\small$B$};
         \node at (\cx*\rad+\ox-0.2,\cy*\rad+\oy-0.2) {\small$A$};
        \node at (\ax*\rad+\ox+0.2,\ay*\rad+\oy-0.2) {\small$C$};

        \node at (\cpx*\rad+\ox-0.2,\cpy*\rad+\oy+0.2){\small$A_1$};
        \node at (\bpx*\rad+\ox+0.2,\bpy*\rad+\oy-0.2){\small$B_1$};
\node at (\apx*\rad+\ox,\apy*\rad+\oy+0.2){\small$C_1$};
\node at (\cppx*\rad+\ox,\cppy*\rad+\oy-0.2){\small$A_2$};
\node at (\bppx*\rad+\ox,\bppy*\rad+\oy+0.2){\small$B_2$};
\node at (\appx*\rad+\ox,\appy*\rad+\oy-0.25){\small$C_2$};

    \end{scope}
\end{tikzpicture}
    \caption{Left: Fagnano orbit. Right: Unwrapping argument.} 
	\label{fig:Fagnano}
\end{figure}

The argument above presents a dynamical interpretation of the Fagnano result: the orthic triangle is the only $3$-periodic billiard trajectory inside an acute triangle. That is, the light ray, reflected in the sides of the acute triangle $ABC$ closes up after three reflections if and only if it follows the sides of orthic triangle. Interestingly, this argument fails when $ABC$ is not acute. If $ABC$ is obtuse, the orthocenter lies on the exterior of $ABC$, and thus the pedal triangle does not belong to the interior of $ABC$. The minimizer degenerates to the double cover of the altitude of minimal length, and the unwrapped picture does not contain straight segments.

The question of whether periodic billiard trajectories exist in any obtuse triangle remains open and has garnered significant interest over the past decades (see, e.g., \cite{Schwartz2009} for an overview of recent progress).

In the present note we address the following modification of the Fagnano question.
\begin{problem}\label{prb:Fagngno}
Describe the length-minimizing configuration if the sides of the rubber band have different stiffness $(\lambda_A,\lambda_B,\lambda_C)$. 
\end{problem}

 Thanks to Lemma \ref{lm:Snell}, this problem can be reformulated with the dynamical systems flavor of finding the Snell's billiard trajectory with the refraction coefficients $(\varkappa_a,\varkappa_b,\varkappa_c)$ given by
\begin{equation}\label{eq:lambdakappa}\lambda_A:\lambda_B=\varkappa_c,\qquad \lambda_B:\lambda_C=\varkappa_a,\qquad \lambda_C:\lambda_A=\varkappa_b.\end{equation}
Here refraction coefficients are associated with the sides of the triangle $ABC$.
Multiplying the above identities, we get  $\varkappa_a\varkappa_b\varkappa_c = 1$ so that after three reflections we return back to the usual metric. Our main results can be summarized as follows:
\begin{itemize}
\item Every $3$-periodic trajectory of a Snell billiard inside a triangle corresponds to the pedal triangle of a certain point $F$, which we call the \emph{Snell–Fagnano point} (see Lemma \ref{lm:point}).

\item The existence of a Snell–Fagnano point inside a triangle with side lengths $(a,b,c)$ and stiffness parameters $(\lambda_A,\lambda_B,\lambda_C)$ is equivalent to the existence of a point whose distances to the vertices are proportional to $(\lambda_A,\lambda_B,\lambda_C)$ (see Lemma \ref{lm:isogonalFagnano}).

\item The existence of the Snell–Fagnano point is further equivalent to the existence of a triangle with side lengths $(\lambda_A a,\, \lambda_B b,\, \lambda_C c)$ (see Lemma \ref{lm:Akopyan}).

\item Finally, we provide an elementary geometric construction of the Snell–Fagnano point and derive explicit conditions for it to lie inside the triangle $ABC$ (see Theorem \ref{th:main} and Corollary \ref{cor:main}). When these conditions fail, the corresponding Snell billiard trajectory degenerates into a double cover of the shortest altitude of $ABC$.

\end{itemize}

The rest of the note is organized as follows. Section \ref{sec:nomenclature} establishes some necessary notions from the triangle geometry and provide necessary and sufficient conditions for the existence of the Snell-Fagnano point. Section \ref{sec:construction} presents the main construction and discuss its properties. Section \ref{sec:other} addresses further possible generalizations of our result.

\section{Existence of the Snell-Fagnano point.}\label{sec:nomenclature}
 From now on, we denote the reference triangle by $ABC$, its angles by $\alpha$, $\beta$, $\gamma$, and its side lengths by $a$, $b$, $c$, respectively. Whenever no confusion arises, we will use the same letters to denote the entire lines containing the corresponding sides. Finally, $[ABC]$ will stand for the (signed) area of $ABC$ in the natural orientation.
 
 The following lemma asserts that every 3-periodic Snell billiard trajectory can be uniquely represented by choosing some specific point in the interior of the triangle and taking its pedal triangle.

\begin{lemma} \label{lm:point}
Let triangle $A'B'C'$ be inscribed in triangle $ABC$, with $A' \in a, B'\in b, C' \in c$.
Define $h_a$ as the normal to $a$ passing through $A'$, and define $h_b, h_c$ similarly.
Let $\eta_a$ denote the ratio between the sines of the angles formed by $h_a$ with the sides of the triangle $A'B'C'$ (see Fig. \ref{fig:FagnanoSnell}) i.e., 
$
\eta_a = \frac{\sin{\alpha_1}}{\sin{\alpha_2}}$, $
\eta_b = \frac{\sin{\beta_1}}{\sin{\beta_2}}$, 
$\eta_c = \frac{\sin{\gamma_1}}{\sin{\gamma_2}}
$. 
Then the lines $h_a$, $h_b$ and $h_c$ are concurrent if and only if
$\eta_a\eta_b\eta_c=1.$
\end{lemma}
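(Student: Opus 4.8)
The plan is to recognize the three lines $h_a$, $h_b$, $h_c$ as \emph{cevians} of the inscribed triangle $A'B'C'$: each passes through one vertex ($h_a$ through $A'$, $h_b$ through $B'$, $h_c$ through $C'$), so their concurrency is governed by Ceva's theorem applied to $A'B'C'$. Since, by definition, $\alpha_1$ and $\alpha_2$ are precisely the angles that $h_a$ makes with the two sides $A'B'$ and $A'C'$ emanating from $A'$ (and similarly for $\beta_i$, $\gamma_i$), the ratios $\eta_a,\eta_b,\eta_c$ are exactly the quantities occurring in the trigonometric form of Ceva's criterion. The whole lemma should therefore fall out of translating Ceva's ratio condition into sines.

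Concretely, I would first let $h_a$ meet the line $B'C'$ at a point $X$, working with signed lengths so the construction is meaningful even when $X$ lands on an extension. The key computation is the standard area identity
\begin{equation*}
\frac{\overline{B'X}}{\overline{XC'}} = \frac{[A'B'X]}{[A'C'X]} = \frac{|A'B'|\,\sin\alpha_1}{|A'C'|\,\sin\alpha_2},
\end{equation*}
obtained by writing each of the two triangles' areas as one-half the product of the sides meeting at $A'$ times the sine of the included angle. Writing the two analogous identities for $h_b$ (meeting $C'A'$) and $h_c$ (meeting $A'B'$) and multiplying, the side-length factors cancel cyclically,
\begin{equation*}
\frac{|A'B'|}{|A'C'|}\cdot\frac{|B'C'|}{|B'A'|}\cdot\frac{|C'A'|}{|C'B'|}=1,
\end{equation*}
so the product of the three signed ratios equals exactly $\eta_a\eta_b\eta_c$.

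By Ceva's theorem the three cevians are concurrent if and only if this product of signed ratios is $1$; combined with the previous step, that is precisely the condition $\eta_a\eta_b\eta_c=1$, which finishes the argument.

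I expect the main obstacle to be the bookkeeping of orientations so that the conclusion is a genuine \emph{if and only if} rather than merely a one-directional implication. One must invoke the directed (signed-ratio) version of Ceva, match the labelling of $\alpha_1$ versus $\alpha_2$ to the correct adjacent side consistently around the triangle, and confirm that the degenerate configurations — when the point of concurrency runs off to infinity (the three normals parallel) or falls outside $A'B'C'$ — are still correctly captured. Fixing a uniform convention for which side each angle is measured from, or equivalently phrasing everything in directed angles, resolves this cleanly.
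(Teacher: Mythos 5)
Your proof takes essentially the same route as the paper's: both treat $h_a,h_b,h_c$ as cevians of the inscribed triangle $A'B'C'$, convert each sine ratio $\eta$ into a ratio of segments cut on the opposite side via the area (law-of-sines) identity, cancel the side-length factors in the cyclic product, and conclude by Ceva's theorem. If anything, your bookkeeping of signed ratios and of degenerate configurations is more careful than the paper's terse argument, which leaves those conventions implicit.
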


\begin{proof}
    This follows from Ceva's theorem applied to the triangle $A'B'C'$. Denote by $\hat{B}$ the intersection of the line $h_b$ with the side $C'A'$ and define $\hat{A}$, $\hat{C}$ correspondingly.  Using area,
    \[
    \eta_a = \frac{\sin \alpha_1}{\sin \alpha_2} = \frac{\dis(C',\hat{B})}{\dis(A',\hat{B})} \cdot \frac{\dis(A',B')}{\dis(C',B')},
    \] and when the etas are multiplied, the $\frac{A'B'}{C'B'}$ factors cancel. By Ceva's theorem, the lines $h_a, h_b, h_c$ are concurrent if and only if 
    $
    \frac{\dis(A',\hat{B})}{\dis(C'\hat{B})}\frac{\dis(C'\hat{A})}{\dis(B'\hat{A})}\frac{\dis(B'\hat{C})}{\dis(A'\hat{C})}= 1
    $.
\end{proof}

Thanks to Lemma \ref{lm:point}, the normals to the sides of the reference triangle at the vertices of the minimal configuration are concurrent at some point. We will call this special point the \emph{Snell-Fagnano point} and denote it by $F_{\lambda}$, with $\lambda$ representing the vector $(\lambda_A,\lambda_B,\lambda_C)$. Thus, every length-minimizing configuration is the pedal triangle of some point inside the triangle and so, in order to construct the minimizer, we should address the question on how to construct the point $F_\lambda$.

\begin{lemma}\label{lm:SFpoint}
The Snell-Fagnano point $F_\lambda$ is the point satisfying the conditions 
\[
\frac{\sin\angle FCA}{\sin\angle FBA}=\varkappa_a, \:
\frac{\sin\angle FAB}{\sin\angle FCB}=\varkappa_b, \:
\frac{\sin\angle FBC}{\sin\angle FAC}=\varkappa_c
\]
with the refraction coefficients provided by the relations \eqref{eq:lambdakappa} (see Fig. \ref{fig:FagnanoSnell}). 
\end{lemma}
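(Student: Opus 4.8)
The plan is to connect the defining property of the Snell--Fagnano point $F_\lambda$ established in Lemma~\ref{lm:point} --- namely that the normals $h_a, h_b, h_c$ to the sides of $ABC$ at the vertices of the pedal triangle are concurrent and satisfy the Snell refraction condition $\eta_a = \varkappa_a$, etc. --- with the angles subtended at $F_\lambda$ by the sides of the reference triangle. The key geometric fact I would exploit is that if $F$ is any interior point with pedal triangle $A'B'C'$ (where $A'\in a$, etc.), then each quadrilateral $FA'\cdot C'$ formed by two feet of perpendiculars and the vertex between them is cyclic, since $F$ sees the two right angles at the feet. Concretely, the points $B, A', F, C'$ lie on a circle with diameter $BF$ (both $\angle BA'F$ and $\angle BC'F$ are right angles), and similarly for the other two vertices.

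From these cyclic quadrilaterals the plan is to rewrite the Snell angles $\alpha_1, \alpha_2$ --- the angles the normal $h_a = A'F$ makes with the sides $A'B'$ and $A'C'$ of the pedal triangle --- in terms of inscribed angles, and thereby in terms of the angles $\angle FBC$, $\angle FCB$, etc., at which $F$ views the sides of $ABC$. For instance, using the circle through $B, A', F, C'$ one identifies $\angle FA'C'$ with $\angle FBC'$ (inscribed angles subtending the arc $FC'$), and using the circle through $C, A', F, B'$ one identifies $\angle FA'B'$ with $\angle FCB'$. The first of these angles equals $\angle FBA$ and the second equals $\angle FCA$ after tracking which vertex each foot of perpendicular is associated with. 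Carrying this bookkeeping through should convert the ratio $\eta_a = \sin\alpha_1/\sin\alpha_2$ into exactly $\sin\angle FCA / \sin\angle FBA$, matching the claimed expression for $\varkappa_a$, and the analogous computations handle $\varkappa_b$ and $\varkappa_c$.

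I would therefore proceed as follows. First, record the three cyclic quadrilaterals and the inscribed-angle identities they furnish. Second, for the side-$a$ normal, express both $\alpha_1$ and $\alpha_2$ as angles inscribed in the appropriate circles and identify them with half-turn--adjusted copies of $\angle FBA$ and $\angle FCA$. Third, substitute into $\eta_a = \sin\alpha_1/\sin\alpha_2$ and observe the result is $\sin\angle FCA/\sin\angle FBA = \varkappa_a$ by Lemma~\ref{lm:point} together with the identification $\eta_a = \varkappa_a$ forced by the refraction law. Fourth, repeat verbatim for $b$ and $c$, and check consistency with the constraint $\varkappa_a\varkappa_b\varkappa_c = 1$ that already guarantees concurrency.

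The main obstacle I anticipate is purely combinatorial rather than conceptual: getting the correspondence between angles right, i.e.\ tracking exactly which inscribed angle in which circle equals which of the six viewing angles $\angle FAB, \angle FAC, \angle FBA, \angle FBC, \angle FCA, \angle FCB$, and ensuring the orientation conventions make the sines appear with the correct pairing so that the product still telescopes to $\varkappa_a\varkappa_b\varkappa_c = 1$. A careful, oriented-angle treatment (working modulo $\pi$) should eliminate sign ambiguities; alternatively, one may verify the identification on a concrete acute configuration to fix the labelling and then argue by continuity. Once the angle dictionary is pinned down, the equivalence of the two descriptions of $F_\lambda$ is immediate.
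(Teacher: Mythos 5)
Your proposal is correct and follows essentially the same route as the paper's proof: the paper likewise observes that each quadrilateral formed by a vertex of $ABC$, the two adjacent feet of perpendiculars from $F$, and $F$ itself is cyclic (e.g.\ $AC'FB'$), and uses the resulting inscribed-angle identifications such as $\angle FC'B'=\angle FAB'$ to translate the Snell ratios $\eta_a,\eta_b,\eta_c$ into the ratios $\sin\angle FCA/\sin\angle FBA$, etc. The only difference is presentational: the paper states one identification and appeals to the symmetry $A\leftrightarrow B\leftrightarrow C$ for the remaining five, while you spell out the bookkeeping you would carry through.
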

\begin{proof}
  Indeed, let $A'B'C'$ be the pedal triangle of the point $F$ with respect to triangle $ABC$. Then since the quadrilateral $AC'PB'$ is cyclic (i.e. can be inscribed in a circle), one gets $\angle FC'B'=\angle FAB'$.
    Thanks to the symmetry $A\leftrightarrow B\leftrightarrow C$, one gets five more identifications.
\end{proof}

\begin{figure}[!hbt]
    \centering
    \includegraphics[width=0.5\linewidth]{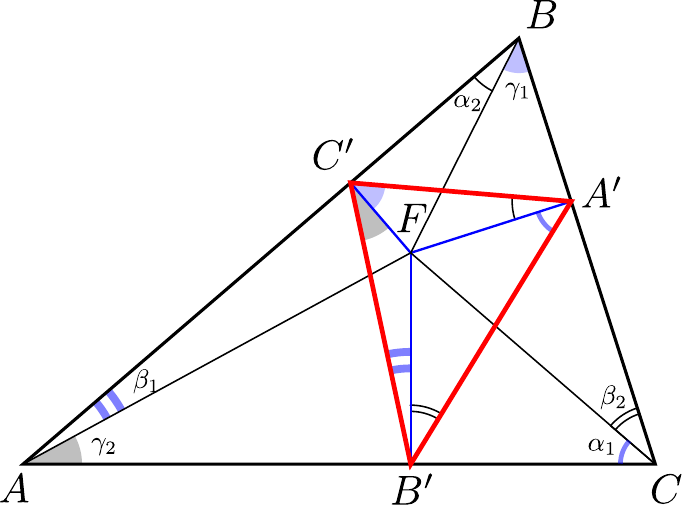}
    \caption{Snell-Fagnano point. }
    \label{fig:FagnanoSnell}
\end{figure}

\begin{remark}
    For any acute triangle $ABC$ and any point $F$ inside it there exist refraction coefficients $\varkappa_a, \varkappa_b, \varkappa_c$ such that the pedal triangle of $F$ is the Snell-Fagnano orbit for those coefficients (see e.g.\cite{Fagnano}).
\end{remark}

\begin{remark}
    Note that we have chosen a particular interpretation of the problem \ref{prb:Fagngno} as a \emph{counter clockwise} Snell billiard trajectory.  Clockwise trajectory can be represented as the pedal triangle of the point $F$ with the refraction coefficients $(\varkappa_a^{-1}, \varkappa_b^{-1}, \varkappa_c^{-1})$ and so is the path minimizing the weighted perimeter with coefficients $(\lambda_A^{-1},\lambda_B^1,\lambda_C^{-1})$.  
\end{remark}

Given the reference triangle $ABC$ one could assign several different parametrizations to the points in the plane. For the sake of clarity, we recall few of such parametrizations. 
\begin{definition}\label{def: coord}
For a point $P$ relative to $ABC$,
\begin{itemize}
    \item The \textit{barycentric coordinates} are the homogeneous triple $(\rho_a : \rho_b : \rho_c),$ where $\rho_a,\rho_b,\rho_c$ are proportional to the signed areas $[PBC]$, $[PCA]$ and $[PAB]$, respectively.  
    If $\rho_a+\rho_b+\rho_c=1$, then we say the coordinates are \textit{normalized} and for the radius vectors of the points the property
    ${\vec{P} = \rho_a \vec{A} + \rho_b \vec{B} + \rho_c \vec{C}}$ holds. 
     If normalized barycentric coordinates are all positive, the point $P$ is called \emph{convex combination} of the vertices of the triangle and thus belongs to the interior of $ABC$ (a.k.a. \emph{convex hull of the vertices}). 
     \item  The \textit{trilinear coordinates} are the homogeneous triple $(l_a:l_b:l_c)$, where $l_a$, $l_b$,  $l_c$ are proportional to the signed distances from $P$ to the sides $BC$, $CA$ and $AB$ respectively. Trilinear coordinates are related to the barycentric coordinates as $(\rho_a:\rho_b:\rho_c)=(a l_a:b l_b:c l_c)$.
    \item The \textit{tripolar coordinates} are the homogeneous triple $(r_A :r_B :r_C)$, where $r_A$ is the distance between $A$ and $P$. It should be noted that tripolar coordinates do not uniquely determine a point: a given triple may correspond to zero, one, or two distinct points in the plane. 
\end{itemize}
\end{definition}

\begin{definition}
    The \textit{isogonal conjugate} $P^*$ of a point $P$ with respect to a triangle $ABC$ is defined as the (unique) intersection of the reflection of the lines $PA, PB, PC$ about the angle bisectors of $\angle A, \angle B,\angle C$ respectively. 
\end{definition}

 \noindent The isogonal conjugacy map is a bijection on the complement to the lines through the vertices of the triangle, $\mathbb{R}^2\backslash\{AB \cup BC \cup CA\}$. Moreover, isogonal conjugation maps points inside the triangle to interior points and exterior points to exterior points. In trilinear coordinates, the isogonal conjugacy takes the form $$(l_a:l_b:l_c)\mapsto (l_a^{-1}:l_b^{-1}:l_c^{-1}).$$ 

\begin{lemma}\label{lm:isogonalFagnano}
    The tripolar coordinates of the isogonal conjugate of the Snell-Fagnano point of the triangle $ABC$ are 
     \begin{equation}\label{eq:SF*} F_\lambda^*=(\lambda_A:\lambda_B:\lambda_C).\end{equation}
\end{lemma}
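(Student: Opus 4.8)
The plan is to compute the three tripolar distances $\dis(A,F_\lambda^*)$, $\dis(B,F_\lambda^*)$, $\dis(C,F_\lambda^*)$ directly from the angle data at the three vertices, exploiting the defining property of the isogonal conjugate together with the characterization of $F_\lambda$ in Lemma~\ref{lm:SFpoint}. First I would introduce notation for the six angles into which the cevians from $F_\lambda$ split the angles of $ABC$: write $\angle F_\lambda AB = u$, $\angle F_\lambda AC = u'$ (so $u+u'=\alpha$), and analogously $v,v'$ at $B$ and $w,w'$ at $C$. In this notation Lemma~\ref{lm:SFpoint} reads $\sin w/\sin v' = \varkappa_a$, $\sin u/\sin w' = \varkappa_b$, and $\sin v/\sin u' = \varkappa_c$. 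By the definition of the isogonal conjugate, the cevian from $A$ to $F_\lambda^*$ is the reflection of $AF_\lambda$ in the bisector of $\angle A$, hence it makes angle $u'$ with $AB$ and $u$ with $AC$; that is, $\angle F_\lambda^* AB = u'$ and $\angle F_\lambda^* AC = u$, and symmetrically $\angle F_\lambda^* BA = v$, $\angle F_\lambda^* CB = w$, and so on at the remaining vertices.

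Next I would read off the distance ratios from the three sub-triangles $AF_\lambda^*B$, $BF_\lambda^*C$, $CF_\lambda^*A$ by the law of sines. In $AF_\lambda^*B$ the angle at $A$ is $\angle F_\lambda^* AB = u'$ and the angle at $B$ is $\angle F_\lambda^* BA = v$, so that the side $AF_\lambda^*$ is opposite the angle at $B$ and the side $BF_\lambda^*$ is opposite the angle at $A$, giving
\[
\frac{\dis(A,F_\lambda^*)}{\dis(B,F_\lambda^*)} = \frac{\sin v}{\sin u'} = \varkappa_c = \frac{\lambda_A}{\lambda_B},
\]
where the middle equality is the third relation of Lemma~\ref{lm:SFpoint} and the last is \eqref{eq:lambdakappa}. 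The identical argument in $BF_\lambda^*C$ and $CF_\lambda^*A$ yields $\dis(B,F_\lambda^*)/\dis(C,F_\lambda^*) = \sin w/\sin v' = \varkappa_a = \lambda_B/\lambda_C$ and $\dis(C,F_\lambda^*)/\dis(A,F_\lambda^*) = \sin u/\sin w' = \varkappa_b = \lambda_C/\lambda_A$. These three ratios are mutually consistent since their product is $1$, which is precisely the trigonometric Ceva identity for the concurrent cevians of $F_\lambda$ ($\varkappa_a\varkappa_b\varkappa_c=1$, as already noted after \eqref{eq:lambdakappa}). Chaining them gives $(r_A:r_B:r_C) = (\dis(A,F_\lambda^*):\dis(B,F_\lambda^*):\dis(C,F_\lambda^*)) = (\lambda_A:\lambda_B:\lambda_C)$, which is exactly \eqref{eq:SF*}.

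I expect the only delicate point to be the bookkeeping of the angle labels: one must reflect the subdivision at each vertex correctly, so that the angle $F_\lambda^*$ subtends against a given side equals the angle $F_\lambda$ subtends against the side meeting it at the other endpoint, and then pair each distance ratio with the matching refraction coefficient $\varkappa_a,\varkappa_b,\varkappa_c$ in the correct orientation. Once the labels are fixed the computation is a one-line application of the law of sines in each of the three triangles, and no existence subtleties intervene, since the isogonal conjugate of an interior point is again a well-defined interior point, as recalled before the statement.
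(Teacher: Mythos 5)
Your proof is correct and takes essentially the same route as the paper's: transfer the angle conditions of Lemma~\ref{lm:SFpoint} to $F_\lambda^*$ via the isogonal reflection of the cevians, apply the law of sines in the three subtriangles $AF_\lambda^*B$, $BF_\lambda^*C$, $CF_\lambda^*A$, and conclude with \eqref{eq:lambdakappa}. Your explicit angle bookkeeping is in fact slightly tidier than the paper's own write-up, which records one ratio in inverted form ($\dis(A,F^*)/\dis(C,F^*)=\varkappa_b$ where the sine law actually gives $\dis(C,F^*)/\dis(A,F^*)=\varkappa_b$), a slip that does not affect the final homogeneous proportionality.
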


\begin{proof}
Isogonal conjugation transforms $\angle FBA$ into $\angle F^*BC$, etc. so that 
\[
\frac{\sin\angle F^*CB}{\sin\angle F^*BC}=\varkappa_a, \:
\frac{\sin\angle F^*AC}{\sin\angle F^*CA}=\varkappa_b, \:
\frac{\sin\angle F^*BA}{\sin\angle F^*AB}=\varkappa_c.
\]
    From the sine law for the triangle $AF^*C$ we get $\frac{\dis(A,F^*)}{\dis(C,F^*)}=\frac{\sin\beta_2}{\sin \beta_1}=\varkappa_b$.  Similarly, we get
$
\frac{\dis(B,F^*)}{\dis(C,F^*)} = \varkappa_a$ and  
$\frac{\dis(A,F^*)}{\dis(B,F^*)} = \varkappa_c$ and the claim follows from the relations \eqref{eq:lambdakappa}. 
\end{proof}

\begin{definition}
    The \emph{Apollonian circle} $\odot_{A,B}(r)$ is the locus of points $P$ with
    $
    \frac{\dis(P,A)}{\dis(P,B)}~=~r.
    $    This locus is a circle, and the internal and external division points of $AB$ in the ratio $AP:PB = r:1$, denoted $M$ and $N$ respectively, is a diameter of this circle. (See Fig.~\ref{fig:Apollonian}).
\end{definition}

\begin{figure}[!hbt]
\centering
\begin{tikzpicture}[scale=0.8]
    \tikzset{
			dot/.style={circle,inner sep=1pt,fill,name=#1}} 
		\tikzset{decoration={
			markings,
			mark=at position 0.5 with {\arrow{Stealth}}}}
           \tikzmath{\rad = 0.8; \oy=0; \ox=0; 
} 

\draw[red] (\ox,\oy) circle(3*\rad);
\node[dot=$M$] (M) at (\ox-3*\rad,\oy){};
\node[dot=$N$] (N) at (\ox+3*\rad,\oy){};
\node[dot=$P$] (P) at (\ox+3*0.6*\rad,\oy+3*0.8*\rad){};
\node[dot=$A$] (A) at (\ox-6*\rad,\oy){};
\node[dot=$B$] (B) at (\ox-1*\rad,\oy){};
\draw[thick] (A)--(B);
\draw (M)--(N);
\draw[thin] (A)--(P)--(B);
\draw[dashed] (M)--(P)--(N);
\node at (\ox-6*\rad-0.2,\oy-0.2) {\small$A$};
\node at (\ox-1*\rad-0.1,\oy-0.2) {\small$B$};
\node at (\ox-3*\rad-0.2,\oy-0.2) {\small$M$};
\node at (\ox+3*\rad+0.2,\oy-0.2) {\small$N$};
      \node at (\ox+3*0.6*\rad+0.2,\oy+3*0.8*\rad+0.2){\small$P$};       
     \end{tikzpicture}   
      \includegraphics[width=0.43\linewidth]{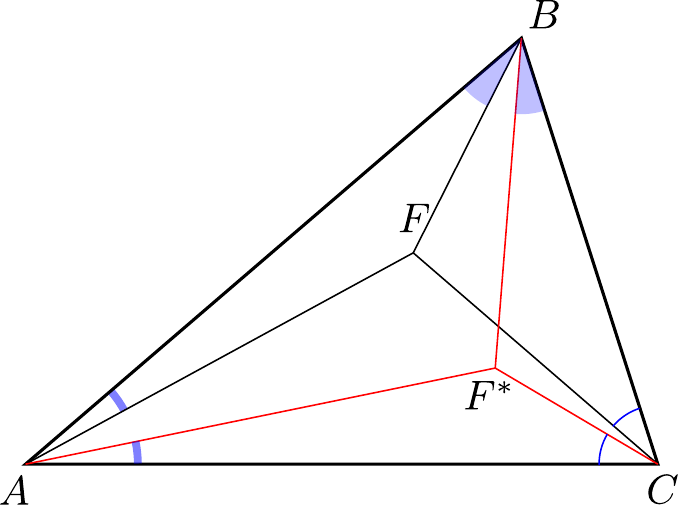}
    \caption{Left: Apollonian Circle $\odot_{A,B}(\lambda)$. Right: $F^*$ is the isogonal conjugate of $F$ with respect to triangle $ABC$}
    \label{fig:Apollonian}
\end{figure}

Hence, the existence of the point with the prescribed tripolar coordinates $(x~:~y~:~z)$ is equivalent to the concurrence of the three Apollonnian circles  $\odot_{A,B}(x/y)$, $\odot_{B,C}(y/z)$ and $\odot_{C,A}(z/x)$.

\begin{remark}
    In the case of $\lambda_A=\lambda_B=\lambda_C$, Lemma \ref{lm:isogonalFagnano} asserts that the isogonal conjugate of the orthocenter is the circumcenter and the corresponding Apollonian circles become perpendicular bisectors. Another special choice of the tripolar coordinates $(1/a:1/b:1/c)$  provide isodynamic points with isogonal conjugate Fermat-Torricelli points.
\end{remark}

\begin{lemma}\label{lm:Akopyan}
   If two Apollonian circles $\odot_{A,B}(\lambda_A/\lambda_B)$ and $\odot_{B,C}(\lambda_B/\lambda_C)$ have common point $P$ then $P$ belongs to the third Apollonian circle $\odot_{C,A}(\lambda_C/\lambda_A)$ and there exists a triangle $\tilde{\triangle}$ with the sides $(\lambda_A a,\lambda_B b, \lambda_C c)$.
\end{lemma}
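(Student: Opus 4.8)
The statement combines two claims, and the first follows immediately from the definition of the Apollonian circles. If $P$ lies on both $\odot_{A,B}(\lambda_A/\lambda_B)$ and $\odot_{B,C}(\lambda_B/\lambda_C)$, then $r_A/r_B=\lambda_A/\lambda_B$ and $r_B/r_C=\lambda_B/\lambda_C$, where I abbreviate $r_A=\dis(P,A)$, $r_B=\dis(P,B)$, $r_C=\dis(P,C)$. Multiplying these two ratios gives $r_A/r_C=\lambda_A/\lambda_C$, i.e. $r_C/r_A=\lambda_C/\lambda_A$, which is precisely the defining condition for $\odot_{C,A}(\lambda_C/\lambda_A)$. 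So I would first record this concurrence, and then fix a single scale $t>0$ with
\[
r_A=\lambda_A t,\qquad r_B=\lambda_B t,\qquad r_C=\lambda_C t .
\]

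The heart of the matter is the existence of $\tilde{\triangle}$, and here the plan is to recognize that the three products $r_A a$, $r_B b$, $r_C c$ obey the triangle inequalities for \emph{every} point $P$ whatsoever --- this is Ptolemy's inequality. I would place $A,B,C,P$ at complex coordinates $\zeta_A,\zeta_B,\zeta_C,p$ and invoke the elementary identity
\[
(p-\zeta_B)(\zeta_A-\zeta_C)=(p-\zeta_A)(\zeta_B-\zeta_C)+(\zeta_A-\zeta_B)(p-\zeta_C).
\]
Passing to moduli and applying the triangle inequality to the right-hand side, together with $|\zeta_A-\zeta_C|=b$, $|\zeta_B-\zeta_C|=a$, $|\zeta_A-\zeta_B|=c$, yields $r_B b\le r_A a+r_C c$. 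Cyclically permuting $A,B,C$ (equivalently, regrouping the same identity) produces the two companion inequalities $r_A a\le r_B b+r_C c$ and $r_C c\le r_A a+r_B b$.

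Substituting $r_A=\lambda_A t$, $r_B=\lambda_B t$, $r_C=\lambda_C t$ and cancelling the common factor $t$, these become exactly the three triangle inequalities for the triple $(\lambda_A a,\lambda_B b,\lambda_C c)$. Hence the desired triangle $\tilde{\triangle}$ exists; concretely it is similar to the triangle assembled from the segments $r_A a$, $r_B b$, $r_C c$.

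The only delicate point --- and the place where the argument is genuinely tight rather than slack --- is non-degeneracy. Equality in Ptolemy's inequality holds precisely when $P,A,B,C$ are concyclic in the relevant cyclic order, that is, when $P$ lies on the circumcircle of $ABC$; in that case one of the three inequalities becomes an equality and $\tilde{\triangle}$ collapses to a segment. I would therefore close by observing that $\tilde{\triangle}$ is a bona fide positive-area triangle exactly when the common point $P$ of the two Apollonian circles avoids the circumcircle, which dovetails with the degeneration phenomenon flagged in the introduction. Spotting the clean Ptolemy identity is the one creative step; everything else is bookkeeping.
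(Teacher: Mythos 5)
Your proof is correct and follows essentially the same route as the paper's own argument: both reduce the concurrence claim to multiplying the two distance ratios, and both obtain the triangle $\tilde{\triangle}$ from Ptolemy's inequality applied to $A$, $B$, $C$, $P$. You are in fact somewhat more complete than the paper, which writes down only one of the three inequalities (leaving the rest to symmetry), whereas you derive all three from the complex-number identity and also pin down the degenerate case ($P$ on the circumcircle), matching the remark following the lemma.
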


\begin{proof}
Points on the circle $\odot_{A,B}(\lambda_A/\lambda_B)$ have tripolar coordinates $(\lambda_A t:\lambda_B t: z)$. If some of them $P$ belongs to the second circle, it follows that $\dis(P,A)=\lambda_A t$, $\dis(P,B)=\lambda_B t$, $\dis(P,C)=\lambda_C t$. The Ptolemey inequality for the quadrilateral $ABPC$ then reads 
$b \lambda_B t+c \lambda_C t\geqslant a \lambda_A t$. Which finishes the proof of the Lemma.   

Here we reproduce another,  elegant proof of the same result, provided by A. Akopyan \cite{Akopyan}. For the triangle $ABC$, consider the transformation (scaling and rotation), mapping $AC$ to $AB$. Denote by $C'$ and by $P'$ the images of the points $B$ and $P$ under this transformation (see Fig.\ref{fig:Akopyan}).
Therefore we have  $b\dis(P,B) = c \dis(P', C') $.
\begin{figure}[!hbt]
    \centering
    \begin{tikzpicture}[scale=3]

\pgfmathsetmacro{\Ax}{0}   
\pgfmathsetmacro{\Ay}{0}
\pgfmathsetmacro{\Bx}{1.5}   
\pgfmathsetmacro{\By}{0}
\pgfmathsetmacro{\Cx}{1.0} 
\pgfmathsetmacro{\Cy}{0.8}
\pgfmathsetmacro{\Px}{0.75}
\pgfmathsetmacro{\Py}{0.35}

\coordinate (A) at (\Ax,\Ay);
\coordinate (B) at (\Bx,\By);
\coordinate (C) at (\Cx,\Cy);
\coordinate (P) at (\Px,\Py);

\pgfmathsetmacro{\ABlen}{veclen(\Bx-\Ax,\By-\Ay)}
\pgfmathsetmacro{\AClen}{veclen(\Cx-\Ax,\Cy-\Ay)}
\pgfmathsetmacro{\s}{\ABlen/\AClen}

\pgfmathsetmacro{\angAB}{atan2(\By-\Ay,\Bx-\Ax)} 
\pgfmathsetmacro{\angAC}{atan2(\Cy-\Ay,\Cx-\Ax)}
\pgfmathsetmacro{\theta}{\angAB-\angAC}
\pgfmathsetmacro{\c}{cos(\theta)}
\pgfmathsetmacro{\si}{sin(\theta)}

\pgfmathsetmacro{\BX}{\Bx-\Ax} \pgfmathsetmacro{\BY}{\By-\Ay}
\pgfmathsetmacro{\Cpx}{\Ax + \s*(\c*\BX - \si*\BY)}
\pgfmathsetmacro{\Cpy}{\Ay + \s*(\si*\BX + \c*\BY)}
\coordinate (Cp) at (\Cpx,\Cpy); 

\pgfmathsetmacro{\PX}{\Px-\Ax} \pgfmathsetmacro{\PY}{\Py-\Ay}
\pgfmathsetmacro{\Ppx}{\Ax + \s*(\c*\PX - \si*\PY)}
\pgfmathsetmacro{\Ppy}{\Ay + \s*(\si*\PX + \c*\PY)}
\coordinate (Pp) at (\Ppx,\Ppy); 

\draw[red,thick] (Pp)--(B)--(P)--cycle;
\draw[thick] (A)--(B)--(C)--cycle;
\draw[thick,blue] (A)--(B)--(Cp)--cycle; 

\draw[dashed] (P)--(A) (P)--(C);
\draw[dashed,blue] (Pp)--(A) (Pp)--(Cp);

\fill (A) circle(0.3pt) node[below left] {$A$};
\fill (B) circle(0.3pt) node[below right] {$B$};
\fill (C) circle(0.3pt) node[above] {$C$};
\fill (Cp) circle(0.3pt) node[right] {$C'$};
\fill (P) circle(0.3pt) node[left] {$P$};
\fill (Pp) circle(0.3pt) node[below left] {$P'$};

    \end{tikzpicture}
    \caption{Proof of the Lemma \ref{lm:Akopyan}.}
    \label{fig:Akopyan}
\end{figure}
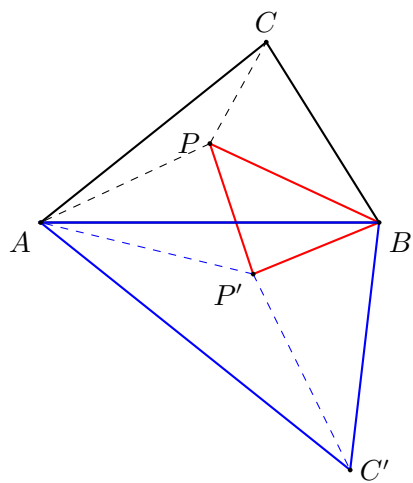

The triangle $APP'$ is similar to the triangle $ACB$ with coefficient of propotionality $\frac{\dis(A,P)}{\dis(A,C)}$. Hence $\dis(P,P')=\frac{a}{b}\dis(A,P)$. 
Hence for the triangle $PBP'$ one has
$$\dis(P',P):\dis(P,B):\dis(P',B)=a\dis(P,A):b\dis(P,B):c\dis(P,C).$$  

\end{proof}

\begin{remark}
    Any Apollonian circle cuts the circumcircle orthogonally and thus is mapped to itself under the inversion in the circumcircle. Hence, if the triangle with the sides $(\lambda_A a, \lambda_B b \lambda_c)$ exists, then the Apollonian circles intersect in the pair of inverse points, if such triangle is degenerate, then the intersection point is unique and belongs to the circumcircle. 
\end{remark}

\section{Main construction.}\label{sec:construction}
From Lemma \ref{lm:point} it follows that the Snell--Fagnano configuration with the weights $(\lambda_A,\lambda_B,\lambda_C)$ is uniquely described by the Snell-Fagnano point $F_\lambda$. Lemma \ref{lm:isogonalFagnano} states that the Snell-Fagnano point is isogonal conjugate of the point $F^*_\lambda$ with tripolar coordinates provided by the wieghts $(\lambda_A:\lambda_B:\lambda_C)$. 

Finally, Lemma~\ref{lm:Akopyan} states that the existence of the point $F^*_\lambda$ implies the existence of a triangle $\tilde{\triangle}$ with side lengths $(a\lambda_A,, b\lambda_B,, c\lambda_C)$. We now present a simple geometric construction of the point $F_\lambda$ and discuss the conditions under which it lies inside the triangle $ABC$. Our construction shows that the existence of the two objects mentioned in Lemma~\ref{lm:Akopyan} is, in fact, equivalent.

\begin{lemma}\label{lm:construction}
    Let $A_1$, $B_1$ and $C_1$ are taken in such way that the triangles $BA_1C$ $B_1AC$ and $BAC_1$ are similar to the triangle $\tilde{\triangle}$. Then the lines $AA_1$ $BB_1$ and $CC_1$ are concurrent at some point $F$.
\end{lemma}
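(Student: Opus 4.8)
The plan is to deduce concurrency from the trigonometric form of Ceva's theorem applied to the cevians $AA_1$, $BB_1$, $CC_1$ of the reference triangle $ABC$; in fact the configuration is an instance of Jacobi's theorem, and the cleanest self-contained route is to verify the trigonometric Ceva identity directly. Writing $\tilde\alpha,\tilde\beta,\tilde\gamma$ for the angles of $\tilde\triangle$ opposite the sides $\lambda_A a,\lambda_B b,\lambda_C c$, I would first pin down the vertex correspondence in each similarity so that the base $BC$ of $BA_1C$ is the image of the $\lambda_A a$-side, and analogously for the other two erected triangles. With this bookkeeping the base angles become
\[
\angle A_1BC=\angle C_1BA=\tilde\beta,\quad \angle A_1CB=\angle B_1CA=\tilde\gamma,\quad \angle B_1AC=\angle C_1AB=\tilde\alpha,
\]
so that each angle of $\tilde\triangle$ is erected twice, once at each endpoint of the corresponding side. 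This is exactly the symmetric configuration underlying Jacobi's theorem.

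Next I would compute the three angle ratios that enter trigonometric Ceva. Splitting the cevian $AA_1$ between the triangles $ABA_1$ and $ACA_1$, which share the side $AA_1$, and applying the law of sines in each gives
\[
\frac{\sin\angle BAA_1}{\sin\angle CAA_1}=\frac{\dis(B,A_1)\,\sin\angle ABA_1}{\dis(C,A_1)\,\sin\angle ACA_1}.
\]
The lengths $\dis(B,A_1)$ and $\dis(C,A_1)$ are read off from $BA_1C\sim\tilde\triangle$ (they equal $\lambda_C c/\lambda_A$ and $\lambda_B b/\lambda_A$), while $\angle ABA_1=\beta+\tilde\beta$ and $\angle ACA_1=\gamma+\tilde\gamma$ in the external construction. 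Repeating this verbatim at $B$ and at $C$ yields two further ratios of the same shape, each a quotient of two side-lengths of the erected triangles multiplied by a quotient of two sines of the form $\sin(\theta+\tilde\theta)$.

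The final step is to multiply the three ratios in the cyclic order $A\to B\to C$. The sine factors pair off — $\sin(\alpha+\tilde\alpha)$, $\sin(\beta+\tilde\beta)$, $\sin(\gamma+\tilde\gamma)$ each appear once in a numerator and once in a denominator — and the length factors $\lambda_A a,\lambda_B b,\lambda_C c$ likewise telescope, so the whole product collapses to $1$ and Ceva's criterion is satisfied. Notice that nothing about $\tilde\triangle$ beyond its being a genuine triangle is used; this matches the fact that concurrency here is a purely Jacobi-type phenomenon, and the particular side lengths $(\lambda_A a,\lambda_B b,\lambda_C c)$ only become relevant afterwards, when the concurrency point $F$ is identified with the Snell--Fagnano point $F_\lambda$.

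The step I expect to demand the most care is the orientation bookkeeping: choosing the vertex correspondences together with the inward/outward placement of $A_1,B_1,C_1$ consistently, so that the paired base angles genuinely coincide and the combined angles come out as $\beta+\tilde\beta$ rather than $|\beta-\tilde\beta|$. I would handle this most cleanly with directed angles modulo $\pi$, which both automates the sign conventions in the product and sidesteps any question of whether $F$ lands inside $ABC$ — the latter being a separate issue settled by the subsequent results.
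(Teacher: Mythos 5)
Your proof is correct and takes essentially the same route as the paper's: both verify Ceva's criterion for the cevians $AA_1$, $BB_1$, $CC_1$ of triangle $ABC$, extract the needed lengths and angles from the similarity with $\tilde{\triangle}$, and collapse the same telescoping product of the factors $\lambda_A a$, $\lambda_B b$, $\lambda_C c$ and $\sin(\alpha+\tilde{\alpha})$, $\sin(\beta+\tilde{\beta})$, $\sin(\gamma+\tilde{\gamma})$. The only difference is bookkeeping --- you use the trigonometric form of Ceva together with the law of sines in the triangles $ABA_1$ and $ACA_1$, whereas the paper uses the segment form, computing $\dis(C,A_0)/\dis(B,A_0)$ via the area ratio $[ACA_1]/[ABA_1]$ --- plus your identification of the configuration as an instance of Jacobi's theorem, a contextual remark the paper does not make.
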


\begin{figure}[!hbt]
    \centering
    
\begin{tikzpicture}[scale=0.6]

\pgfmathsetmacro{\Ax}{0}   
\pgfmathsetmacro{\Ay}{0}
\pgfmathsetmacro{\Bx}{4}   
\pgfmathsetmacro{\By}{4}
\pgfmathsetmacro{\Cx}{6}   
\pgfmathsetmacro{\Cy}{0}

\pgfmathsetmacro{\La}{0.6}
\pgfmathsetmacro{\Lb}{0.4}
\pgfmathsetmacro{\Lc}{0.3}

\coordinate (A) at (\Ax,\Ay);
\coordinate (B) at (\Bx,\By);
\coordinate (C) at (\Cx,\Cy);

\pgfmathsetmacro{\ABlen}{veclen(\Bx-\Ax,\By-\Ay)}
\pgfmathsetmacro{\AClen}{veclen(\Cx-\Ax,\Cy-\Ay)}
\pgfmathsetmacro{\BClen}{veclen(\Cx-\Bx,\Cy-\By)}

\pgfmathsetmacro{\rBone}{\Lc/\La * \ABlen} 
\pgfmathsetmacro{\rCtwo}{\Lb/\La * \AClen} 

\pgfmathsetmacro{\dax}{\Cx-\Bx}
\pgfmathsetmacro{\day}{\Cy-\By}
\pgfmathsetmacro{\da}{veclen(\dax,\day)}

\pgfmathsetmacro{\a}{(\rBone*\rBone - \rCtwo*\rCtwo + \da*\da) / (2*\da)}
\pgfmathsetmacro{\hh}{\rBone*\rBone - \a*\a}
\pgfmathsetmacro{\pax}{\Bx + \a * \dax / \da}
\pgfmathsetmacro{\pay}{\By + \a * \day / \da}

\pgfmathsetmacro{\ixone}{\pax + sqrt(\hh) * (-\day) / \da}
\pgfmathsetmacro{\iyone}{\pay + sqrt(\hh) * (\dax)  / \da}
\pgfmathsetmacro{\ixtwo}{\pax - sqrt(\hh) * (-\day) / \da}
\pgfmathsetmacro{\iytwo}{\pay - sqrt(\hh) * (\dax)  / \da}

\pgfmathsetmacro{\sideA}{(\Ax-\Bx)*(\Cy-\By) - (\Ay-\By)*(\Cx-\Bx)}
\pgfmathsetmacro{\side1}{(\ixone-\Bx)*(\Cy-\By) - (\iyone-\By)*(\Cx-\Bx)}

\coordinate (Apint) at (\ixone,\iyone);


\pgfmathsetmacro{\ta}{((\ixone-\Bx)*\dax + (\iyone-\By)*\day) / (\da*\da)}
\pgfmathsetmacro{\xah}{\Bx + \ta * \dax}
\pgfmathsetmacro{\yah}{\By + \ta * \day}
\pgfmathsetmacro{\xar}{2*\xah - \ixone}
\pgfmathsetmacro{\yar}{2*\yah - \iyone}

\coordinate (Ap) at (\xar,\yar); 

\pgfmathsetmacro{\ratCA}{(\La*\BClen/(\Lc*\ABlen))}
\pgfmathsetmacro{\ratBA}{(\La*\BClen/(\Lb*\AClen))}


\pgfmathsetmacro{\angAB}{atan2(\Cy-\By,\Cx-\Bx)} 
\pgfmathsetmacro{\angBP}{atan2(\yar-\By,\xar-\Bx)}

\pgfmathsetmacro{\theta}{\angAB-\angBP}
\pgfmathsetmacro{\c}{cos(-\theta)}
\pgfmathsetmacro{\si}{sin(-\theta)}

\pgfmathsetmacro{\BX}{\Ax-\Bx} \pgfmathsetmacro{\BY}{\Ay-\By}
\pgfmathsetmacro{\Cpx}{\Bx + \ratCA*(\c*\BX - \si*\BY)}
\pgfmathsetmacro{\Cpy}{\By + \ratCA*(\si*\BX + \c*\BY)}
\coordinate (Cpint) at (\Cpx,\Cpy); 


\pgfmathsetmacro{\angAC}{atan2(\By-\Cy,\Bx-\Cx)} 
\pgfmathsetmacro{\angCP}{atan2(\yar-\Cy,\xar-\Cx)}

\pgfmathsetmacro{\thetac}{\angAC-\angCP}
\pgfmathsetmacro{\cC}{cos(-\thetac)}
\pgfmathsetmacro{\siC}{sin(-\thetac)}

\pgfmathsetmacro{\CX}{\Ax-\Cx} \pgfmathsetmacro{\CY}{\Ay-\Cy}
\pgfmathsetmacro{\Bpx}{\Cx + \ratBA*(\cC*\CX - \siC*\CY)}
\pgfmathsetmacro{\Bpy}{\Cy + \ratBA*(\siC*\CX + \cC*\CY)}
\coordinate (Bpint) at (\Bpx,\Bpy); 

\fill[opacity=0.1, blue] (C)--(B)--(Cpint);
\fill[opacity=0.1, red] (A)--(B)--(Apint);

\draw[thick] (A)--(B)--(C)--cycle;
\draw[thick,red] (B)--(Apint)--(C); 
\draw[thick,red] (B)--(Cpint)--(A);
\draw[thick,red] (A)--(Bpint)--(C);

\draw[dashed] (Apint) -- (A); 
\draw[dashed] (Cpint) -- (C);
\draw[dashed] (Bpint) -- (B);

\fill (A) circle(2pt) node[below left] {$A$};
\fill (B) circle(2pt) node[above] {$B$};
\fill (C) circle(2pt) node[below right] {$C$};
\fill (Apint) circle(1.5pt) node[right] {$A_1$}; 

\fill (Bpint) circle(1.5pt) node[left] {$B_1$}; 

\fill (Cpint) circle(1.5pt) node[left] {$C_1$}; 

\end{tikzpicture}

\caption{Main construction}
    \label{fig:placeholder}
\end{figure}
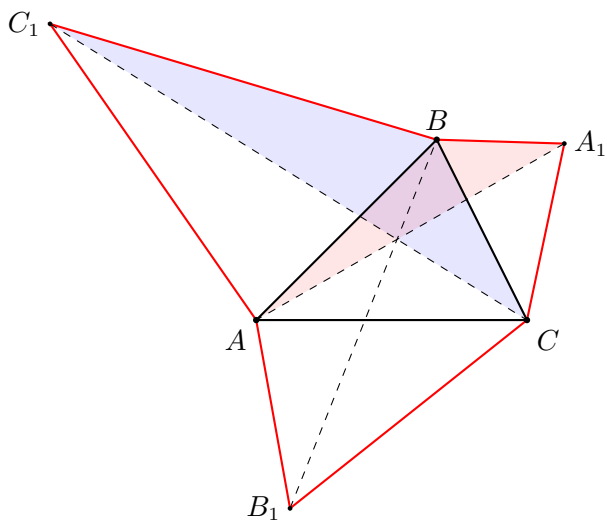

\begin{proof}
    Denote by $A_0$ the point of intersection of 
    $A_1A$ with $BC$ and similarly for $B_0$ and $C_0$. Then \begin{equation}\label{eq:mainCeva}\frac{\dis(C,A_0)}{\dis(B,A_0)}=\frac{[ACA_0]}{[ABA_0]}=\frac{[A_1CA_0]}{[A_1BA_0]}=\frac{[ACA_1]}{[ABA_1]}=\frac{\lambda_B b^2\sin(\gamma+\tilde{\gamma})}{\lambda_C c^2\sin(\beta+\tilde{\beta})}\end{equation} 
    where $\alpha$, $\beta$ $\gamma$ stand for the angles of the triangle $ ABC$ and $\tilde{\alpha}, \tilde{\beta}, \tilde{\gamma}$  denote the corresponding angles of the triangle $\tilde{\triangle}$.

    Hence, $\dfrac{\dis(C,A_0)}{\dis(B,A_0)}\dfrac{\dis(B,C_0)}{\dis(A,C_0)}\dfrac{\dis(A,B_0)}{\dis(C,B_0)}=1$ and the claim follows by Ceva theorem.
\end{proof}

\begin{remark}
    It worth mentioning that the above construction seems to be equivalent to the one, presented in \cite{Hess2015}. However, we haven't been able to match the expressions for barycentric coordinates, mentioned in this paper with our computations. 
\end{remark}

\begin{theorem}[Main Construction.]\label{th:main}
Point $F$, provided by Lemma \ref{lm:construction} is the Snell-Fagnano point $F_\lambda$.
\end{theorem}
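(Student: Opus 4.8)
The plan is to show that the concurrency point $F$ furnished by Lemma~\ref{lm:construction} satisfies the three sine conditions of Lemma~\ref{lm:SFpoint}, which characterize $F_\lambda$. By the cyclic symmetry $A\to B\to C\to A$ of the whole construction (and of the relations \eqref{eq:lambdakappa}, under which $\varkappa_a\to\varkappa_b\to\varkappa_c$), it suffices to verify a single one of them, say
\[
\frac{\sin\angle FCA}{\sin\angle FBA}=\varkappa_a=\frac{\lambda_B}{\lambda_C},
\]
the remaining two following by relabeling. The guiding observation is that \emph{both} angles $\angle FCA$ and $\angle FBA$ are controlled at the single vertex $A$: the cevian $CC_1$ is anchored by the triangle $BAC_1$ erected on $AB$, and the cevian $BB_1$ by the triangle $B_1AC$ erected on $AC$, and these two erected triangles share their apex angle at $A$.

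First I would record the local data at $A$. Using the orientation already encoded in the area computation \eqref{eq:mainCeva} (namely $CA_1\leftrightarrow\lambda_Bb$, $BA_1\leftrightarrow\lambda_Cc$, and cyclically), the similarities $BAC_1\sim\tilde{\triangle}$ and $B_1AC\sim\tilde{\triangle}$ give
\[
\angle C_1AB=\angle B_1AC=\tilde\alpha,\qquad \dis(A,C_1)=\frac{\lambda_B}{\lambda_C}\,b,\qquad \dis(A,B_1)=\frac{\lambda_C}{\lambda_B}\,c.
\]
Because $C_1$ and $B_1$ are erected on the far sides of $AB$ and $AC$, the rays $AC_1$ and $AB_1$ make the common angle $\angle C_1AC=\angle B_1AB=\alpha+\tilde\alpha$ with $AC$ and $AB$ respectively. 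Finally, since $F$ lies on the interior segments of the cevians, the rays $CF$ and $BF$ coincide with $CC_1$ and $BB_1$, whence $\angle FCA=\angle ACC_1$ and $\angle FBA=\angle ABB_1$.

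Next I would compute these two angles from two-sides-and-included-angle data: in a triangle with angle $\theta$ between sides of lengths $p$ and $q$, the angle at the far endpoint of the side $p$ has tangent $q\sin\theta/(p-q\cos\theta)$. Applying this in $ACC_1$ (with $p=\dis(A,C)=b$, $q=\dis(A,C_1)=\tfrac{\lambda_B}{\lambda_C}b$, $\theta=\alpha+\tilde\alpha$) and in $ABB_1$ (with $p=\dis(A,B)=c$, $q=\dis(A,B_1)=\tfrac{\lambda_C}{\lambda_B}c$, the \emph{same} $\theta$) yields
\[
\tan\angle FCA=\frac{\lambda_B\sin(\alpha+\tilde\alpha)}{\lambda_C-\lambda_B\cos(\alpha+\tilde\alpha)},\qquad
\tan\angle FBA=\frac{\lambda_C\sin(\alpha+\tilde\alpha)}{\lambda_B-\lambda_C\cos(\alpha+\tilde\alpha)}.
\]
These are reciprocal under $\lambda_B\leftrightarrow\lambda_C$, so $\sin\angle FCA$ and $\sin\angle FBA$ carry the \emph{common} normalizing factor $\bigl(\lambda_B^2+\lambda_C^2-2\lambda_B\lambda_C\cos(\alpha+\tilde\alpha)\bigr)^{-1/2}$; dividing, everything cancels except $\sin\angle FCA/\sin\angle FBA=\lambda_B/\lambda_C=\varkappa_a$, as required. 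Cyclic relabeling supplies the other two identities, so $F$ meets the characterization of Lemma~\ref{lm:SFpoint} and therefore $F=F_\lambda$.

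The routine trigonometry is harmless; the hard part is the bookkeeping in the first step, namely fixing the \emph{orientation} of the three erected similar triangles so that the two triangles meeting at a vertex share their apex angle there and present the reciprocal side ratios $\lambda_B:\lambda_C$ and $\lambda_C:\lambda_B$ (this is exactly the orientation implicit in \eqref{eq:mainCeva}). Note that the final ratio is insensitive to the precise value of the shared apex angle, so only its \emph{equality} for the two triangles matters. A convenient consistency check is the isotropic case $\lambda_A=\lambda_B=\lambda_C$: then $\tilde{\triangle}\sim ABC$, each $A_1$ is the reflection of $A$ in $BC$, the cevians become the altitudes, $\alpha+\tilde\alpha=2\alpha$, and the formula returns $\angle FCA=\angle FBA=\tfrac{\pi}{2}-\alpha$, recovering the orthocenter of the classical Fagnano problem.
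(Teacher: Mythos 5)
Your proposal is correct, and its skeleton matches the paper's: identify $F$ via the sine characterization of Lemma~\ref{lm:SFpoint}, verify a single ratio, and let cyclic symmetry supply the other two. Where you differ is the mechanism of that verification. The paper proves $\sin\angle FAB/\sin\angle FCB=\varkappa_b$ synthetically, via a spiral similarity at $B$: triangles $ABA_1$ and $C_1BC$ are SAS-similar (common angle $\beta+\tilde{\beta}$ at $B$, equal ratios $BA:BC_1=BA_1:BC=c\lambda_C:a\lambda_A$), which transports the angle $\angle AA_1B$ sitting on the cevian $AA_1$ to the angle $\angle C_1CB$ sitting on the cevian $CC_1$; one application of the law of sines in triangle $BAA_1$ then finishes. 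You instead prove $\sin\angle FCA/\sin\angle FBA=\varkappa_a$ computationally, staying at the vertex $A$, where the two erected triangles $BAC_1$ and $B_1AC$ share the apex angle $\tilde{\alpha}$, and extracting both angles as tangents from SAS data. The two arguments are close relatives rather than strangers: your common normalizing factor $\bigl(\lambda_B^2+\lambda_C^2-2\lambda_B\lambda_C\cos(\alpha+\tilde{\alpha})\bigr)^{1/2}$ is exactly the footprint of the spiral similarity $ACC_1\sim AB_1B$ at $A$ (same included angle $\alpha+\tilde{\alpha}$, equal ratios $AC:AB_1=AC_1:AB=\lambda_B b:\lambda_C c$), from which one could conclude synthetically in the paper's style: $\angle ACC_1=\angle AB_1B$, then the law of sines in $ABB_1$ gives $\lambda_B/\lambda_C$. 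What your route buys is explicitness --- the orientation of the erected triangles and the lengths $AC_1=\lambda_B b/\lambda_C$, $AB_1=\lambda_C c/\lambda_B$ are pinned down, and the insensitivity to the actual value of the shared apex angle is plainly visible --- at the cost of trigonometric bookkeeping where the paper needs two lines. One minor caution: in passing from your tangent formulas to the sines, argue via the law of sines in $ACC_1$ and $ABB_1$ (equivalently, $\sin\phi=S/\sqrt{S^2+C^2}$ when $\tan\phi=S/C$ with $S>0$), since the denominators $\lambda_C-\lambda_B\cos(\alpha+\tilde{\alpha})$ and $\lambda_B-\lambda_C\cos(\alpha+\tilde{\alpha})$ can be negative when the corresponding angle is obtuse; your stated normalizing factor remains correct in that case, so the issue is purely presentational.
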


\begin{proof}
   Triangles $ABA_1$ and $C_1BC$ are similar with the coefficient of proportionality $\frac{c\lambda_C}{a\lambda_A}$ and so
   $\angle AA_1B=\angle C_1CB.$
   
From the law of sines for the triangle $BAA_1$
 we get $\frac{\sin(BAA_1)}{\sin(BA_1A)}=\frac{\dis(B,A_1)}{\dis(B,A)}=\frac{\lambda_C}{\lambda_A}$ and the assertion of the theorem follows from the lemma \ref{lm:SFpoint}.     
\end{proof}

\begin{corollary} \label{cor:main}
   The Snell-Fagnano point $F_\lambda$ belongs to the interior of the triangle $ABC$ if
   \begin{equation}\label{eq:conditions}
   \begin{cases}
            \alpha+\tilde{\alpha}\leqslant \pi\\
            \beta+\tilde{\beta}\leqslant \pi\\
            \gamma+\tilde{\gamma}\leqslant \pi
        \end{cases}.\end{equation}
   \end{corollary}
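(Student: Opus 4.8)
The plan is to read the construction of Lemma~\ref{lm:construction} and Theorem~\ref{th:main} through the sign of the cevian ratio \eqref{eq:mainCeva}. Concurrency of $AA_1,BB_1,CC_1$ is already established, so $F_\lambda$ is interior to $ABC$ exactly when all three feet $A_0,B_0,C_0$ land strictly inside their sides. Everything therefore reduces to a sign analysis of \eqref{eq:mainCeva} read with \emph{signed} areas rather than unsigned lengths.

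First I would pin down the auxiliary points. By construction $A_1$ lies on the opposite side of the line $a=BC$ from $A$, with $\angle A_1BC=\tilde\beta$ and $\angle A_1CB=\tilde\gamma$, so the full angles at $B$ and $C$ are $\angle ABA_1=\beta+\tilde\beta$ and $\angle ACA_1=\gamma+\tilde\gamma$, and cyclically for $B_1,C_1$. Placing $B,C$ on a horizontal axis with $A$ above it and evaluating the two signed areas by the apex-angle formula yields
$$[ACA_1]=-\frac{\lambda_B b^2}{2\lambda_A}\sin(\gamma+\tilde\gamma),\qquad [ABA_1]=\frac{\lambda_C c^2}{2\lambda_A}\sin(\beta+\tilde\beta),$$
so that, reading \eqref{eq:mainCeva} as an identity of signed quantities,
$$\frac{\overline{CA_0}}{\overline{BA_0}}=\frac{[ACA_1]}{[ABA_1]}=-\frac{\lambda_B b^2\sin(\gamma+\tilde\gamma)}{\lambda_C c^2\sin(\beta+\tilde\beta)},$$
where $A_0$ is the intersection of line $AA_1$ with $BC$. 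A one-line coordinate check shows the directed ratio $\overline{CA_0}/\overline{BA_0}$ is negative exactly when $A_0$ lies strictly between $B$ and $C$; hence $A_0$ is interior to side $a$ iff $\sin(\gamma+\tilde\gamma)$ and $\sin(\beta+\tilde\beta)$ have the same sign, and cyclically $B_0,C_0$ require the pairs $\{\gamma+\tilde\gamma,\alpha+\tilde\alpha\}$ and $\{\alpha+\tilde\alpha,\beta+\tilde\beta\}$ to match signs.

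Next I would use the global constraint. Since $\alpha+\beta+\gamma=\pi=\tilde\alpha+\tilde\beta+\tilde\gamma$, the three sums satisfy $(\alpha+\tilde\alpha)+(\beta+\tilde\beta)+(\gamma+\tilde\gamma)=2\pi$ with each term in $(0,2\pi)$, so at most one of them can exceed $\pi$. The three matching conditions above are pairwise, and any two of them force all three sines to share a sign; that common sign can only be positive, and this happens precisely when all three sums are at most $\pi$ — exactly hypothesis \eqref{eq:conditions}. Under \eqref{eq:conditions} all three feet therefore lie inside their sides, and since a cevian from a vertex to an interior point of the opposite side meets the open triangle, the common point $F_\lambda$ furnished by Theorem~\ref{th:main} lies in the interior of $ABC$.

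The step I expect to be the real obstacle is the sign bookkeeping: one must check both that the area ratio carries the displayed minus sign (tracking the orientation reversal caused by $A_1$ sitting across $BC$ from $A$) and that \emph{foot inside the segment} corresponds to a \emph{negative} directed ratio; getting either convention backwards collapses the argument. I would also record the behaviour on the boundary of \eqref{eq:conditions}: when one sum equals $\pi$ the matching sine vanishes, two feet collapse onto a common vertex, and $F_\lambda$ degenerates to that vertex; when one sum exceeds $\pi$, two feet leave their sides and $F_\lambda$ falls outside $ABC$, matching the degeneration to the doubled shortest altitude described in the introduction. Thus \eqref{eq:conditions} with strict inequalities is in fact necessary and sufficient for $F_\lambda$ to lie strictly inside the triangle.
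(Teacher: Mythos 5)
Your argument is correct, and it reaches the conclusion along what is at bottom the same reduction as the paper: $F_\lambda$ is interior to $ABC$ precisely when the cevians of Lemma~\ref{lm:construction} cross the interiors of the opposite sides. The paper's own proof is a single synthetic sentence --- $F$ is interior iff the quadrilaterals $ABA_1C$, $BCB_1A$, $CAC_1B$ are convex --- leaving the reader to observe that the angles of $ABA_1C$ at $B$ and $C$ are $\beta+\tilde\beta$ and $\gamma+\tilde\gamma$, so that convexity is exactly \eqref{eq:conditions}. You instead run the Ceva computation \eqref{eq:mainCeva} with signed areas: the foot $A_0$ is interior to $BC$ iff $\sin(\beta+\tilde\beta)$ and $\sin(\gamma+\tilde\gamma)$ agree in sign, and the identity $(\alpha+\tilde\alpha)+(\beta+\tilde\beta)+(\gamma+\tilde\gamma)=2\pi$ rules out the all-negative alternative. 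Your sign conventions check out (with $ABC$ positively oriented one indeed gets $[ACA_1]<0<[ABA_1]$ when both sines are positive, and a negative directed ratio does correspond to an interior foot), and the step from ``all feet interior'' to ``$F_\lambda$ interior'' is sound: in barycentric terms the pairwise sign conditions force all three coordinates of $F_\lambda$ to share a sign. What your route buys beyond the paper's one-liner is, first, the converse --- the conditions are also necessary --- and, second, the boundary analysis: equality in one condition forces two feet onto a common vertex, so $F_\lambda$ is that vertex and not an interior point. That last observation is a genuine, if minor, sharpening of the statement: with the non-strict inequalities as printed, the corollary's conclusion ``belongs to the interior'' requires the strict version of \eqref{eq:conditions}, exactly as you say.
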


\begin{proof}
    For the point $F$, provided by Lemma \ref{lm:construction} to reside in the interior of the triangle $ABC$ is equivalent to the quadrilaterals $ABA_1C$, $BCB_1A$ and $CAC_1B$ being convex.
\end{proof}

\begin{remark}
Strictly speaking, our construction provides only some of the points, satisfying the conditions of Lemma \ref{lm:SFpoint}. As it follows from Lemmas \ref{lm:isogonalFagnano} and \ref{lm:Akopyan} there are two such points, with at most one in the interior of the triangle. A more formal way to characterize whether the point $F$ belongs to the interior of the triangle $ABC$ is to ask for the positivity of all barycentric coordinates of $F$. Or, equivalently, ask for the positivity of all barycentric coordinates of the isogonal conjugate point $F^*$, having tripolar coordinates $(\lambda_A:\lambda_B:\lambda_C)$, thanks to Lemma \ref{lm:isogonalFagnano}.

Unfortunately, we haven't found correct formulas for the conversion from tripolar to barycentric coordinates in the existing literature (cf. \cite{Hess2015}, \cite{Casey}). Hence, we are providing our own expressions in the appendix and encourage the curious readers to convince themselves that the conditions \eqref{eq:conditions} are indeed equivalent to the positivity of the barycentric coordinates provided by the expressions  \eqref{eqn: tripolar}.

\end{remark}

\begin{remark}
    If on the other hand, none of the common points of the Apollonius circles belong to the interior of the triangle, arguments similar to the ones in the classical case show that the Snell-minimizer degenerates to the double cover of the shortest triangle altitude. We omit these case studies for the sake of brevity.  
\end{remark}
\section{Discussion and further directions.}\label{sec:other}
Next we will present few possible directions for the generalizations of our findings.

\subsection{Other polygons.}
The most natural generalization is to consider Snell billiards (or, equivalently, inscribed $n$-gons of minimal weighted perimeter) for $(n>3)$-gonal table (see e.g. \cite{Knox2023}, \cite{Davis_AdvGeom_2018}).

As there is no direct analog of the geometric methods used in this paper for general $n$-gons, we still believe that some natural questions might be addressed. For instance, it would be interesting to find the analog of lemma \ref{lm:isogonalFagnano}:
\emph{Is the existence of the minimal Snell's billiard trajectory with the weights $(\lambda_1,\dots,\lambda_n)$ equivalent to the existence of the point at the distances to the vertices proportional to $(\lambda_1:\lambda_2:\ldots:\lambda_n)$?}

\subsection{Other geometries.} As Fagnano problem can be posed in other geometries as well, it is natural to investigate weighted Fagnano problem for hyperbolic and spherical triangles. 

\subsection{Outer billiards.} One can also consider Snell modification of \emph{dual billiards}. Recall that the dual billiard map around the polygon is defined as a reflection in the right-most vertex of the polygon. One could consider the modified reflection with the scaling parameters $(\varkappa_1, \ldots,\varkappa_n)$, satisfying the conditions $\prod\limits_{j=1}^n \varkappa_j=1$ and look for the periodic trajectories of thus defined transformation. In the forthcoming paper \cite{Jay}
, the second author generalized the notion of the necklace dynamics from \cite{Gutkin} for the case of rational polygons and rational weights.

\subsection{Contractive and dispersive Snell billiards.} Lastly, one can consider the case of Snell's billiards where the conditions of lemma \ref{lm:point} are not satified. Thus if the product $\prod\limits_{j=1}^n \varkappa_j$ is less than one, we expect the behavior to be similar to the so-called slap map (see e.g. \cite{pinball},\cite{slap_map}, \cite{bolotin2016degenerate}).

\section{Acknowledgements}

 Authors are grateful to the MIT PRIMES-USA program, particularly Dr. Tanya Khovanova, Dr. Felix Gotti, Dr. Slava Gerovitch, and Prof. Pavel Etingof, for making this research possible.
 First author was supported by Simons foundation grant MPS-TSM-00013259

\bibliographystyle{plain}
\bibliography{billiards}

\appendix

\section{Conversion from tripolar to barycentric coordiantes.}

We will present a corrected version of the coordinate conversion formulas from \cite{Casey}. We will stick to the Conway notations (see e.g. \cite{Conway}).
We will denote by $\triangle$ the triangle with the sides $(a,b,c)$ and by $\tilde{\triangle}$ the triangle with the sides $(Xa,Yb,Zc)$.
   We let $S_a = \frac{b^2+c^2-a^2}{2}$,  $S_{\tilde{a}} = \frac{(Yb)^2+(Zc)^2-(Xa)^2}{2}$, and define $S_b, S_c$ , $S_{\tilde{b}}, and S_{\tilde{c}}$ similarly. 
    For the unsigned areas of the triangles we will use the notations $[\triangle]$ $[\tilde{\triangle}]$.

\begin{theorem}\label{thm: tripolar}
Let $\triangle= ABC$ be fixed, and let $P$ have tripolar coordinates $(X~:~Y~:~Z)$.  Denote 
$
F = (X^2 S_{\tilde{a}} + Y^2 S_{\tilde{b}} + Z^2 S_{\tilde{c}})
    - (a^2 Y^2 Z^2 + X^2 b^2 Z^2 + X^2 Y^2 c^2).
$
Then $P$ has normalized barycentric coordinates $(\rho_a,\rho_b,\rho_c)$ given by
\begin{equation}\label{eqn: tripolar}
    \begin{cases}
        \rho_a = \frac{1}{16[\triangle]}\left((S_c Y^2 + S_b Z^2 - 2 a^2 X^2)\,s^2 + a^2 S_a\right) \\
        \rho_b = \frac{1}{16 [\triangle]}\left((S_a Z^2 + S_c X^2 - 2 b^2 Y^2)\,s^2 + b^2 S_b\right) \\
        \rho_c = \frac{1}{16[\triangle]}\left((S_b X^2 + S_a Y^2 - 2 c^2 Z^2)\,s^2 + c^2 S_c\right) 
    \end{cases},
\end{equation}
where \footnote{ It is worth noticing that under our assumptions both choices of the right hand side give non-negative expressions.}
$
s^2 = \dfrac{(abc)^2 \pm 16\,[\triangle][\tilde{\triangle}]}{2F}.
$
\end{theorem}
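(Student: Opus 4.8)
The plan is to turn the tripolar data into a system of quadratic equations for the normalized barycentric coordinates $(\rho_a,\rho_b,\rho_c)$, linearize that system by taking pairwise differences, solve the resulting linear system for the $\rho$'s as affine functions of the unknown scale $s^2$, and only at the very end pin down $s^2$ from one of the original (genuinely quadratic) equations. First I would record the barycentric displacement identity: writing $\vec P-\vec A=\rho_b(\vec B-\vec A)+\rho_c(\vec C-\vec A)$ and expanding, using $\dis(A,B)=c$, $\dis(A,C)=b$ and $(\vec B-\vec A)\cdot(\vec C-\vec A)=S_a$, gives
\[
\dis(P,A)^2=c^2\rho_b^2+b^2\rho_c^2+2S_a\rho_b\rho_c,
\]
together with its two cyclic companions. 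Setting $\dis(P,A)=sX$, $\dis(P,B)=sY$, $\dis(P,C)=sZ$ (the tripolar triple is homogeneous, so the common scale is absorbed into $s$) produces three quadratic equations in $(\rho_a,\rho_b,\rho_c,s^2)$, to be solved alongside the normalization $\rho_a+\rho_b+\rho_c=1$.

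The key simplification is that differences linearize. Subtracting the $B$-equation from the $A$-equation and repeatedly invoking the Conway identities $S_b+S_c=a^2$, $S_c+S_a=b^2$, $S_a+S_b=c^2$ together with $\rho_a+\rho_b+\rho_c=1$, every quadratic monomial collapses and one is left with the affine relation
\[
s^2(X^2-Y^2)=(S_a-S_b)-2S_a\rho_a+2S_b\rho_b,
\]
and its cyclic analogues. These three relations are linearly dependent (they sum to zero), so adjoining the normalization yields a linear system with a unique solution, expressing each $\rho$ as an affine function of $s^2$. Carrying this out, and using $S_aS_b+S_bS_c+S_cS_a=S^2=4[\triangle]^2$ to clear the denominator $\tfrac{1}{2}(1/S_a+1/S_b+1/S_c)^{-1}$, reproduces exactly the affine shape of \eqref{eqn: tripolar} with $s^2$-coefficient $S_cY^2+S_bZ^2-a^2X^2$ and constant term $a^2S_a$ for $\rho_a$. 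This is the step where all the explicit constants (and any lingering factor discrepancies with \cite{Casey}) must be tracked with care.

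It remains to determine the scale $s^2$. Since each $\rho$ is now affine in $s^2$, say $\rho_b=\frac{1}{2S^2}(b^2S_b+s^2L_b)$ and $\rho_c=\frac{1}{2S^2}(c^2S_c+s^2L_c)$, substituting back into a single \emph{absolute} equation, e.g. $s^2X^2=c^2\rho_b^2+b^2\rho_c^2+2S_a\rho_b\rho_c$, gives a genuine quadratic in $s^2$; its two roots are precisely the (generically two) points sharing the tripolar triple $(X:Y:Z)$, which are inverse in the circumcircle as observed after Lemma~\ref{lm:Akopyan}. The main obstacle is the algebra of this quadratic. One must recognize that its leading coefficient matches $F$: the coefficient of $(s^2)^2$ has exactly the same shape $c^2L_b^2+b^2L_c^2+2S_aL_bL_c$ as the distance form above, and a direct (if tedious) expansion identifies it with $F=(X^2S_{\tilde a}+Y^2S_{\tilde b}+Z^2S_{\tilde c})-(a^2Y^2Z^2+b^2X^2Z^2+c^2X^2Y^2)$. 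Second, one must check that the discriminant simplifies, via Heron's formula applied to $\tilde{\triangle}$, to $(16[\triangle][\tilde{\triangle}])^2$. Granting these two identities, the quadratic formula delivers
\[
s^2=\frac{(abc)^2\pm 16[\triangle][\tilde{\triangle}]}{2F},
\]
completing the proof. I expect the Heron-type discriminant identification, together with the verification that the leading coefficient is $F$, to be the genuinely laborious point; everything preceding it is bookkeeping with the Conway symbols.
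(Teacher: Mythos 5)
Your overall strategy is the same as the paper's: first express the normalized barycentrics as \emph{affine} functions of the unknown scale $s^2$ (a parametrization of the radical axis of the three Apollonian circles), then determine $s^2$ as a root of a quadratic. The two proofs differ only in execution. For the linear stage, you difference the squared-distance forms $\dis(P,A)^2=c^2\rho_b^2+b^2\rho_c^2+2S_a\rho_b\rho_c$ (your identity $s^2(X^2-Y^2)=(S_a-S_b)-2S_a\rho_a+2S_b\rho_b$ checks out), whereas the paper derives its linear equations trigonometrically, from $2[\triangle]=bc\sin(\angle BAP+\angle PAC)$ with areas substituted for sines and the law of cosines for cosines. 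For the closing stage, you substitute into the single absolute equation $s^2X^2=\dis(P,A)^2$, whereas the paper substitutes its parametric line into the Apollonian circle $\odot_{B,C}(Y/Z)$ of Proposition~\ref{prop: apollonian}. These choices are interchangeable; your linearization is arguably cleaner.

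The genuine gap is in the two identifications you defer as ``laborious'': they are not laborious, they are false, because the printed constants in the statement are themselves erroneous, and your own correct computation already shows it. Solving the linear system gives, as you say, $\rho_a=\bigl((S_cY^2+S_bZ^2-a^2X^2)s^2+a^2S_a\bigr)/(8[\triangle]^2)$, with coefficient $-a^2X^2$ and denominator $8[\triangle]^2$, \emph{not} the printed $-2a^2X^2$ and $16[\triangle]$; the circumcenter ($X=Y=Z=1$, $s^2=R^2$) confirms your version, since it yields $(a^2S_a:b^2S_b:c^2S_c)$, while the printed formula yields $(a^2(S_a-R^2):\cdots)$, which is not the circumcenter (for a right triangle it even has a negative entry where a zero is required). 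So your derivation cannot ``reproduce exactly'' \eqref{eqn: tripolar}; it reproduces a corrected version of it. The same happens at the final step: the leading coefficient of your closing quadratic cannot equal the printed $F$, because at $(1:1:1)$ the quadratic must degenerate to a linear equation (the second point with those tripolar ratios is the inverse of the circumcenter, i.e.\ at infinity), yet the printed $F$ equals $-(a^2+b^2+c^2)/2\neq 0$ there. A numerical test kills the closed form as well: for the incenter of the $(3,4,5)$ triangle, with tripolar $(\sqrt2:\sqrt5:\sqrt{10})$, the two admissible scales are $s^2=1$ and $s^2=5$, while the printed expression forces the ratio of the two roots to be $\bigl((abc)^2+16[\triangle][\tilde{\triangle}]\bigr)/\bigl((abc)^2-16[\triangle][\tilde{\triangle}]\bigr)=6480/720=9$; hence the discriminant is not $(16[\triangle][\tilde{\triangle}])^2$ either. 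To be fair, the paper's own proof has exactly this gap --- it asserts that the biquadratic's roots are ``the one presented in the statement'' without computing them --- so your proposal is no weaker than the paper's argument; but as written neither proves the printed theorem, and carrying your (sound) method to the end would produce corrected formulas rather than \eqref{eqn: tripolar} and the stated $s^2$.
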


\begin{proof}
Let $\dis(P,A)=X s$ and similarly for $\dis(P,B)$ and $\dis(P,C)$. For normalized barycentric coordinates $(\rho_a,\rho_b,\rho_c)$, the area of the triangle $PBC$ equals $\rho_a[\triangle]$ and similarly for other triangles. Then for the area of the triangle $ABC$ we get

 $$\begin{aligned}
     2[\triangle]&=bc\sin A=bc(\sin\angle BAP\cos\angle PAC+\cos \angle BAP\sin\angle PAC)=\\
     &=bc[\triangle]\left(\frac{2\rho_b}{b (Xs)}\frac{c^2+(Xs)^2-(Ys)^2}{2 c (Xs)}+\frac{2\rho_c}{c (Xs)}\frac{b^2+(Xs)^2-(Zs)^2}{2 b (Xs)}\right).
 \end{aligned}$$
So
 $$2(Xs)^2=(\rho_b(c^2+(Xs)^2-(Ys)^2)+\rho_c(b^2+(Xs)^2-(Zs)^2)$$
 and, since $\rho_a+\rho_b+\rho_c=1$,
 $$  (-Xs)^2 \rho_a+ (c^2-(Ys)^2)\rho_b+(b^2-(Zs)^2)\rho_c=(Xs)^2.$$

Similarly, we obtain two more linear equations from the areas of the triangles $PBC$ and $PCA$. Solving these three linear equations on $\rho_a,$ $\rho_b$ and $\rho_c$, we obtain the expressions \eqref{eqn: tripolar}.

    To solve for $t,$ we compute the intersections of the parametric line given by \eqref{eqn: tripolar} with the Apollonian circle on $\odot_{B,C}(Y/Z).$

    \begin{proposition}\label{prop: apollonian}
Let $P =(\rho_a, \rho_b, \rho_c)$ be \textit{normalized} barycentric coordinates, and let $Y, Z > 0$. Then the locus of points satisfying $\dis(B,P)/\dis(C,P) = Y/Z$ is 
\begin{equation}
\label{eq:apollonian}
(\rho_a^2c^2+\rho_c^2a^2+\rho_a\rho_c S_b)Z^2 = (\rho_a^2b^2 + \rho_b^2a^2+\rho_a\rho_b S_c)Y^2. 
\end{equation}
\end{proposition}
\begin{proof}
 Compute the lengths $\dis(B,P)$ and $\dis(C,P)$. We have 
    \begin{align*}
        \dis(B,P)^2 &= \norm{\rho_a(\vec{PA}-\vec{PB}) + \rho_c(\vec{PC}-\vec{PB})}^2 =
        \rho_a^2c^2 + \rho_c^2a^2 + \rho_a\rho_c S_b. 
    \end{align*}
    Similarly, 
    $
    \dis(C,P)^2 = \rho_a^2b^2 + \rho_b^2a^2+\rho_a\rho_b S_c, 
    $ and the result follows from the direct application of $
    \frac{\dis(B,P)^2}{\dis(CP)^2} = \frac{Y^2}{Z^2}.
    $
\end{proof}

Writing the barycentric coordinates in \eqref{eqn: tripolar} as
\begin{equation}
    \begin{cases}
        \rho_a = \alpha_0+ \alpha_1 s^2, \\
        \rho_b = \beta_0 + \beta_1 s^2, \\
        \rho_c = \gamma_0 + \gamma_1 s^2,
    \end{cases}
\end{equation}
where $\alpha_0 = a^2S_a, \alpha_1 = S_cY^2 + S_bZ^2 - 2a^2X^2$, and so on. Substituting these values into the equation \eqref{eq:apollonian} and rearranging gives the biquadratic equation
\[
A_2(s^2)^2+A_1s^2+A_0 = 0, 
\] where, letting $k=Y/Z$, we have
\begin{equation}
    \begin{cases}
        A_{2} &= (c^{2} - k^{2} b^{2}) \alpha_{1}^{2}
+ a^{2}(\gamma_{1}^{2} - k^{2}\beta_{1}^{2})
+ S_{B}\alpha_{1}\gamma_{1}
- k^{2} S_{C}\alpha_{1}\beta_{1}, 
\\A_{1} &= 2(c^{2} - k^{2} b^{2}) \alpha_{0}\alpha_{1}
+ 2a^{2}(\gamma_{0}\gamma_{1} - k^{2}\beta_{0}\beta_{1})
\\&\qquad+ S_{B}(\alpha_{0}\gamma_{1} + \alpha_{1}\gamma_{0})
- k^{2} S_{C}(\alpha_{0}\beta_{1} + \alpha_{1}\beta_{0}), \\
A_{0} &= (c^{2} - k^{2} b^{2}) \alpha_{0}^{2}
+ a^{2}(\gamma_{0}^{2} - k^{2}\beta_{0}^{2})
+ S_{B}\alpha_{0}\gamma_{0}
- k^{2} S_{C}\alpha_{0}\beta_{0}.
    \end{cases}
\end{equation}
 The solution to this biquadratic is 
the one presented in the statement. Since $F$ is positive and $a^2b^2c^2\geqslant 16[\triangle][\tilde{\triangle}]$ both roots are positive and thus the theorem \ref{thm: tripolar} is proven.
\end{proof}

\end{document}